\theoremstyle{definition} 
 \newtheorem{definition}{Definition}[section]
\theoremstyle{plain}      
 \newtheorem{proposition}[definition]{Proposition}
 \newtheorem{theorem}[definition]{Theorem}
\def\sech{\mbox{sech}}
\newcommand{\Real}{\mathbb R}
\newcommand{\Hyp}{\mathbb{H}}
\newcommand{\Sph}{\mathbb{S}}
\newcommand{\Z}{\mathbb{Z}}
\newcommand{\Cplex}{\mathbb{C}}
\newcommand{\Vol}{\mbox{Vol}}
\newcommand{\Length}{\mbox{Length}}
\begin{document}
\title{Identities on Hyperbolic Manifolds}
\author{Martin Bridgeman\thanks{This work was partially supported by a grant from the Simons Foundation (\#266344 to Martin Bridgeman)} and  Ser Peow Tan\thanks{Tan was partially supported by the National University
of Singapore academic research grant R-146-000-156-112.}}
\address{
Boston College\\
Chestnut Hill, Ma 02116, USA\\
email:\,\tt{bridgem@bc.edu}
\\[4pt]
National University of Singapore\\
 10 Lower Kent Ridge Road,  S(119076),
Singapore\\
email:\,\tt{mattansp@nus.edu.sg}
}

\maketitle

\begin{abstract}
In this survey, we discuss four classes of identities due principally to Basmajian, McShane, Bridgeman-Kahn and Luo-Tan on hyperbolic manifolds and provide a unified approach for proving them. We also elucidate on the connections between the various identities.
\end{abstract}
\begin{classification}
\end{classification}
\begin{keywords} Hyperbolic manifolds, identities, orthogeodesic, ortholength, orthospectrum, simple geodesics, geodesic flow.
\end{keywords}
\section{Introduction}\label{s-1}
In the last couple of decades, several authors have discovered various remarkable and elegant identities on hyperbolic manifolds, including Basmajian \cite{Bas93}, McShane \cite{McS91, McS98}, Bridgeman-Kahn \cite{B11,BK10} and Luo-Tan \cite{LT11}. Some of these identities have been  generalized and extended, with  different and independent proofs  given in some cases.       In addition to their intrinsic beauty and curiosity value, some of the identities have found important
       applications, in particular, the McShane identity as generalized by Mirzakhani to surfaces with boundary
        played a crucial role in Mirzakhani's computation of the Weil-Petersson volumes of the moduli spaces of
         bordered Riemann surfaces, as well as some subsequent  applications; the Bridgeman-Kahn identity gave lower bounds for the volumes of hyperbolic manifolds with totally geodesic boundary.
Many of these identities were proven independently of each other, for example, although the Basmajian and McShane
identities appeared at about the same time, the authors seemed unaware of each other's work at that point; the
Bridgeman-Kahn identity, although it appeared later was also proven somewhat independently of both of these works.
The exception is the Luo-Tan identity which borrowed inspiration from 
the previous works.

The aim of this article is to explore these identities, to
 analyse the various ingredients which make them work,
and to provide a common framework for which to understand them. We
hope that this will not only put the identities in a more natural
setting and make them easier to understand, but also  point the way
towards a more unified theory with which to view these beautiful
identities, and also point the direction towards possible applications for these identities. 

The unifying idea behind all of these identities is fairly simple.
One considers a set $X$ with finite measure $\mu(X)$ associated with
the hyperbolic manifold $M$,  (for example $X=\partial M$, the
boundary of $M$, or $X=T_1(M)$, the unit tangent bundle of $M$) and
look for interesting geometric/dynamical/measure theoretic
decompositions of $X$. Typically, by exploring some geometric or
dynamical aspect of $M$, one can show that the set $X$ decomposes
into a countable union of disjoint subsets $X_i$ of finite non-zero
measure, and a set $Z$ which is geometrically and dynamically
complicated and interesting, but which has measure zero (this is
what we mean by a measure theoretic decomposition of $X$). One
deduces the fact that $Z$ has measure zero from a deep but
well-known result from hyperbolic geometry or dynamical systems. The identity is then just
the tautological equation
$$\mu(X)=\sum_i\mu(X_i).$$
The second part of the problem consists of analysing the sets $X_i$,
in particular, to compute their measures in terms of various
geometric quantities like spectral data. This can be relatively
simple, for example in the case of the Basmajian identity, somewhat
more complicated, like the McShane  and Bridgeman-Kahn identities or
considerably more involved, as in the Luo-Tan identities. A typical
feature is that the sets $X_i$ are indexed by either simple
geometric objects on $M$ like orthogeodesics, or simple subsurfaces
of $M$, like embedded one-holed tori or thrice-punctured spheres. In
particular, their measures depend only on the local geometry and
data, and not on the global geometry of $M$.

One can develop this viewpoint further, for example letting $X$ be the
set of geodesics on $M$ with the Liouville measure  and associating to $X$ the length of the geodesic as a random variable and computing the moment
generating function of this. In this way, for example, the Basmajian
and the Bridgeman-Kahn identities can be viewed as different moments
of the same generating function, see \cite{BT13}.

Alternatively,  as in the case of Bowditch's proof \cite{Bow96} of
McShane's original identity, one can adopt a different viewpoint,
and prove it using a combination of algebraic and combinatorial
techniques. This  has been developed further in \cite{Bow97, Bow98, TWZ06b, TWZ08, TWZ08c} etc, and
provides an interesting direction for further exploration.

Our main aim in this survey is to demystify these identities and to
show that the basic ideas involved in deriving them are very simple.
As such, the exposition will be somewhat leisurely, and where
necessary, we will present slightly different proofs and
perspectives than the original papers. We will refer the reader to
the original papers for the more technical details of computing the
measures $\mu(X_i)$.

The rest of the paper is organized as follows.
In the next couple of sections we first state the four sets of
identities, and then sketch the proofs for these identities from our
perspective. Subsequently, we give a short discussion of the moment
point of view adopted in \cite{BT13} which allows one to view the
Basmajian and Bridgeman identities as different moments of the same
variable and follow this with a short discussion of the Bowditch proof of the McShane identity and subsequent  developments. We conclude the survey
with some open questions and directions for further investigations.

\subsection{Literature}
The literature on the subject is fairly large and growing. To aid the reader we will now give a brief synopsis by identity.

\medskip
\noindent {\bf McShane Identity:} The McShane identity first appeared in McShane's 1991 thesis  \cite{McS91}, ``A remarkable identity for lengths of curves''. This was subsequently generalized (to higher genus surfaces) and published in \cite{McS98}. In the papers \cite{Bow96, Bow97, Bow98}, Bowditch gives a proof of the McShane identity using Markov triples, with extensions to punctured torus bundles and type-preserving quasi-fuchsian representations, see also  \cite{AMS2004, AMS2006} by Akiyoshi, Miyachi and Sakuma for variations. The identity was extended to surfaces with cone singularities in Zhang's  2004 thesis \cite{Zhang04}, see also  \cite{TWZ06}. A Weierstrass points version of the identity was derived by McShane in \cite{McS04}, and the identity was also generalized to closed surfaces of genus two  in \cite{McS06}, using similar techniques. Both of these can also be derived using the  hyperelliptic involution on the punctured torus and  on a genus two surface,  and using the identity on the resulting cone surfaces, as explained by Tan, Wong and Zhang in  \cite{TWZ06}. Mirzakhani gave a proof of the general McShane identity for bordered surface in her 2005 thesis \cite{Mir05} which was subsequently published in  \cite{Mir07}.  In \cite{TWZ06b}, the identity was generalized to the $\mbox{SL}(2,\Cplex)$ case by Tan, Wong and Zhang and for non-orientable surfaces by Norbury in \cite{Nor}. The identity was generalized to $\mbox{PSL}(n,\Real)$ for Hitchin representations  by  Labourie and McShane in \cite{LaMcS09}. A version for two-bridge links was given by Lee and Sakuma in \cite{LS13}. Recent work of Hu, Tan and Zhang in \cite{HTZ13, HTZ13b} have also given new variations and extensions of the identity to the context of Coxeter  group actions on $\Cplex^n$.

\medskip
\noindent{\bf Basmajian Identity:} The Basmajian identity appears in the 1993 paper \cite{Bas93}, ``The orthogonal spectrum of a hyperbolic manifold''. A recent paper of Vlamis \cite{V13} analyses the statistics of the Basmajian identity and derives a formula for the moments of its associated hitting random variable. In the recent paper \cite{PP13}, Paulin and Parkkonen derive formulae for the asymptotic distribution of orthogonal spectrum in a general negatively curved space.
  
\medskip
\noindent{\bf Bridgeman-Kahn Identity:} The Bridgeman-Kahn identity was first proven in the surface case by the first author in the 2011 paper \cite{B11} ``Orthospectra and Dilogarithm Identities on Moduli Space''. An alternate proof was given by Calegari in \cite{Cal10a}. The general case was proven by Bridgeman-Kahn in the paper \cite{BK10}. The paper \cite{Cal10b} of Calegari analyses the connections between  the Bridgeman-Kahn identity and Basmajian identity and gives an orthospectrum identity that has the same form as the Bridgeman-Kahn identity but arises out of a different decomposition. A recent paper of Masai and McShane \cite{MasMcS} has shown that the identity obtained by Calegari is in fact the original Bridgeman-Kahn identity. In the paper  \cite{BT13}  the authors consider the statistics of the Bridgeman-Kahn identity and derive a formula for the moments of its associated hitting random variable. We show that the Basmajian and Bridgeman-Kahn identities arise as the first two moments of this random variable.

\medskip
\noindent{\bf Luo-Tan identity:} The Luo-Tan identity appears in the 2011 preprint \cite{LT11} ``A dilogarithm identity on Moduli spaces of curves''. A version of the identity for small hyperbolic surfaces can be found in \cite{HT13} and  for surfaces with boundary and non-orientable surfaces in \cite{LT13}.

\bigskip

\noindent {\em Acknowledgements.} We are grateful to Dick Canary, Francois Labourie, Feng Luo, Greg McShane, Hugo Parlier, Caroline Series,
and Ying Zhang for helpful
discussions on this material.


\section{Two orthospectra identities}
We let $M$ be a finite volume hyperbolic manifold with non-empty
totally geodesic boundary. In \cite{Bas93}, Basmajian introduced the
notion of  orthogeodesics for hyperbolic manifolds. An {\em
orthogeodesic} $\alpha$ for $M$ is an oriented proper geodesic arc
in $M$ which is perpendicular to $\partial M$ at its endpoints (see figure \ref{ortho}). Let
$O_M$ be the collection of orthogeodesics for $M$ and $L_M$ the set
of lengths of orthogeodesics (with multiplicity). The set $L_M$ is
the {\em ortholength spectrum}, note that all multiplicities are
even since we consider oriented orthogeodesics.

\begin{figure}[htbp] 
   \centering
   \includegraphics[width=4in]{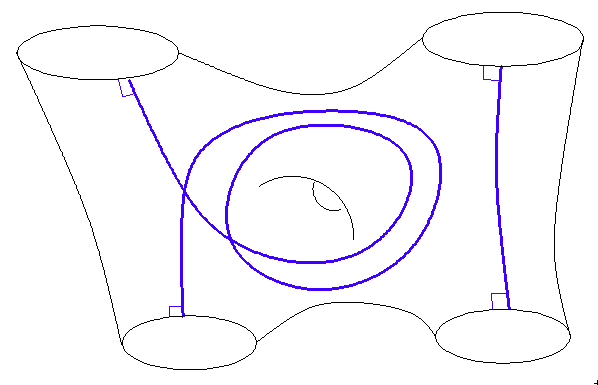} 
   \caption{Two orthogeodesics, one simple, one non-simple}
   \label{ortho}
\end{figure}

\subsection{ Basmajian Identity}
In the 1993 paper \cite{Bas93}, {\em The orthogonal spectrum of a hyperbolic manifold}, Basmajian derived the following orthospectrum identity:
\begin{theorem}{(Theorem A: Basmajian's Identity, \cite{Bas93})}
Let $M$ be a compact volume hyperbolic manifold with non-empty totally geodesic boundary and $V_k(r)$  the volume of the ball of radius $r$ in $\Hyp^k$. Then
$$\Vol(\partial M) = \sum_{l \in L_M} V_{n-1}\left(\log\left(\coth\frac{l}{2}\right)\right).$$
\end{theorem}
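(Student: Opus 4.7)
The plan is to apply the general framework outlined in the introduction, taking $X = \partial M$ with its induced Riemannian area, so that $\mu(X) = \Vol(\partial M)$. To each $p \in \partial M$ I associate the inward perpendicular geodesic ray $\gamma_p$ in $M$. Either $\gamma_p$ meets $\partial M$ again, in which case this first-return arc admits a unique (oriented) orthogeodesic representative $\alpha_p$ in its homotopy class rel $\partial M$, or $\gamma_p$ remains in the interior of $M$ forever. This produces a decomposition
$$\partial M \;=\; Z \;\sqcup\; \bigsqcup_{\alpha \in O_M} X_\alpha, \qquad X_\alpha := \{ p \in \partial M : \alpha_p = \alpha \},$$
where $Z$ is the set of non-returning basepoints. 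The identity then reduces to two claims: $\mu(Z) = 0$, and $\mu(X_\alpha) = V_{n-1}(\log \coth(l_\alpha/2))$ for each oriented $\alpha \in O_M$.

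To verify the first claim, I would pass to the universal cover $\Hyp^n$. The preimage of $M$ is the convex hull of the limit set $\Lambda = \Lambda(\Gamma)$ of $\Gamma = \pi_1(M)$, a convex region whose boundary consists of countably many totally geodesic hyperplane lifts of $\partial M$. Fixing one such lift $\tilde H \cong \Hyp^{n-1}$, the inward perpendicular ray from $p \in \tilde H$ enters this convex hull and remains inside it until it either exits through another lift, or limits to an endpoint at infinity $\xi_p \in \Lambda$ without ever exiting; the latter is exactly the condition $p \in Z$. Since $M$ has nonempty totally geodesic boundary, $\Gamma$ is geometrically finite and is not a lattice in $\operatorname{Isom}(\Hyp^n)$, so by Ahlfors' theorem in dimension three and Sullivan's extension in higher dimensions, $\Lambda$ has zero Lebesgue measure on $\partial \Hyp^n$. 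Pulling this null set back through the perpendicular projection diffeomorphism from $\tilde H$ onto the inward hemisphere at infinity yields $\mu(Z) = 0$.

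The convex-hull picture also supplies the key geometric input for computing $\mu(X_\alpha)$: because every lift of $\partial M$ bounds the convex hull rather than sits in its interior, the perpendicular ray from $p$ can encounter at most \emph{one} other lift. Consequently the footprints $F(\tilde H') := \{p \in \tilde H : \text{perpendicular from } p \text{ exits through } \tilde H'\}$ are pairwise disjoint and partition $\tilde H \setminus Z$. After restricting to a fundamental domain for the stabilizer of $\tilde H$ in $\Gamma$, each $X_\alpha$ is identified with the footprint $F(\tilde H_\alpha)$ of a representative lift at distance $l_\alpha$ from $\tilde H$. Normalizing in the ball model so that the common perpendicular runs along the $x_n$-axis, rotational symmetry about this axis reduces the footprint computation to a two-dimensional hyperbolic-trigonometric calculation, which identifies $F(\tilde H_\alpha)$ with a geodesic ball of radius $\log \coth(l_\alpha/2)$ in $\tilde H \cong \Hyp^{n-1}$, of $(n-1)$-volume $V_{n-1}(\log \coth(l_\alpha/2))$.

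The main obstacle I anticipate is the measure-zero step: one must invoke the nontrivial theorem that the limit set of a non-cofinite geometrically finite Kleinian group has vanishing Lebesgue measure on the sphere at infinity, and carefully identify non-returning rays with rays whose ideal endpoint lies in $\Lambda$. The footprint calculation, by contrast, becomes elementary once one exploits the rotational symmetry around the common perpendicular.
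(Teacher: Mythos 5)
Your proposal is correct and is essentially the paper's own argument: decompose the inward perpendicular flow from $\partial M$, discard a measure-zero set $Z$ via the vanishing Lebesgue measure of the limit set, and identify each remaining piece with the orthogonal projection of one boundary hyperplane lift onto another, a disk of radius $r$ with $\sinh r \sinh l = 1$, i.e. $r = \log\coth(l/2)$. The only cosmetic difference is that you organize the decomposition by the first exit hyperplane in the convex hull (making explicit, via convexity, that the ray meets at most one other lift), whereas the paper phrases the same partition in terms of homotopy classes of $g_v$ rel boundary, each tightened to a unique orthogeodesic.
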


\subsection{ Bridgeman-Kahn identity}
In the 2011 paper \cite{BK10}, {\em Hyperbolic volume of n-manifolds with geodesic boundary and orthospectra}, Bridgeman and Kahn obtained  the following identity for the volume of the unit tangent bundle $T_1(M)$, again in terms of the ortholength spectrum $L_M$.
\begin{theorem}{(Theorem B: Bridgeman-Kahn Identity, \cite{BK10})}
Let  $M$ be a compact hyperbolic manifold with non-empty totally geodesic boundary, then
$$\Vol(T_1(M)) = \sum_{l \in L_M} F_{n}(l)$$
where  $F_n:\Real_+ \rightarrow \Real_+$ is an explicitly described smooth monotonically decreasing function depending only on the dimension $n$.
 \end{theorem}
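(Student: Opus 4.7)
The strategy follows the unifying framework from the introduction with $X=T_1(M)$ equipped with the Liouville measure $\mu$. The plan is to partition $X$, up to a measure-zero set $Z$, into pieces $X_\alpha$ indexed by the orthogeodesics $\alpha\in O_M$, and to show that $\mu(X_\alpha)$ depends only on the length $l(\alpha)$. The identity is then the tautology $\Vol(T_1(M))=\sum_\alpha \mu(X_\alpha)=\sum_{l\in L_M}F_n(l)$, where $F_n(l):=\mu(X_\alpha)$ for any orthogeodesic of length $l$.

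To define the decomposition, I associate to each $v\in T_1(M)$ the maximal geodesic arc $\gamma_v\subset M$ obtained by flowing $v$ both forward and backward until meeting $\partial M$. For almost every $v$ this arc has two distinct endpoints $p_\pm\in\partial M$; lifting $\gamma_v$ to $\Hyp^n$ puts the two endpoints on two distinct hyperplanes $H_-,H_+$ lying over $\partial M$. Since these are disjoint totally geodesic hyperplanes, they admit a unique common perpendicular, which descends to an orthogeodesic $\alpha(v)\in O_M$. Set $X_\alpha:=\{v:\alpha(v)=\alpha\}$. The exceptional set $Z$ consists of vectors whose bi-infinite geodesic fails to hit $\partial M$ in forward or backward time (that is, vectors in the non-wandering set of the geodesic flow restricted to $M$) together with the codimension-one set of vectors tangent to $\partial M$. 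On a finite-volume hyperbolic manifold with totally geodesic boundary, the non-wandering set has Liouville measure zero; this is the ``deep fact'' alluded to in the introduction and follows, for instance, by passing to the doubled manifold and invoking ergodicity of its geodesic flow, or directly by a recurrence argument in the geometrically finite setting.

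To compute $F_n(l)$ I pass to the universal cover. The set $X_\alpha$ lifts to a disjoint union of isometric copies indexed by the deck group, and an unfolding argument reduces $\mu(X_\alpha)$ to the Liouville integral over unit tangent vectors in $\Hyp^n$ whose bi-infinite geodesic meets a fixed hyperplane $H_+$ forward and a fixed hyperplane $H_-$ backward, where $H_\pm$ are at distance $l$ along a common perpendicular. By homogeneity of $\Hyp^n$ this integral depends only on $l$. Using coordinates adapted to the common perpendicular (so that $\Hyp^n$ is foliated by equidistant hypersurfaces to $H_-$), the Liouville measure factors as basepoint volume times unit-sphere measure on directions, and the iterated integral yields an explicit $F_n(l)$. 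Monotonic decrease in $l$ follows either from the resulting formula or geometrically, since the set of directions at a given basepoint whose geodesic connects the two hyperplanes shrinks in solid angle as the slab widens.

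The conceptually clean parts are the decomposition and the measure-zero argument. The main obstacle is the explicit computation of $F_n(l)$ in arbitrary dimension $n$: parametrizing the space of geodesics joining the two prescribed hyperplanes, correctly accounting for the Jacobians coming from the chosen coordinates, and packaging the answer as a single smooth monotonically decreasing function is genuinely dimension-dependent and forms the technical heart of the Bridgeman--Kahn paper, whereas the surface case ($n=2$) admits the cleaner dilogarithmic expression appearing in \cite{B11}.
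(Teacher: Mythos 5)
Your proposal is correct and takes essentially the same route as the paper: your common-perpendicular assignment $v\mapsto\alpha(v)$ is precisely the paper's equivalence relation ``$g_v$ homotopic rel boundary to $\alpha$'' after tightening in the universal cover, the trapped set $Z$ is discarded because the limit set has measure zero (your doubling/ergodicity argument is a valid variant, though note $Z$ is strictly larger than the non-wandering set, since it includes vectors trapped in only one direction), and unfolding plus homogeneity of $\Hyp^n$ shows $\Vol(E_\alpha)$ depends only on $l(\alpha)$ and $n$. The remaining differences are cosmetic: for the explicit formula the paper parametrizes $T_1(\Hyp^n)$ by geodesic endpoints at infinity with $d\Omega = 2\,dV_x\,dV_y\,dt/|x-y|^{2n-2}$ rather than your equidistant-foliation coordinates, and neither your sketch (whose solid-angle heuristic alone is inconclusive, since the slab of basepoints grows as the angular sets shrink) nor the survey actually verifies monotonicity of $F_n$, which both defer to \cite{BK10}.
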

We note that as $\Vol(T_1(M)) = \Vol(M).\Vol(\Sph^{n-1})$, the above identity can also be thought of as an identity for the hyperbolic volume of the manifold $M$.
\subsection{The surface case of Theorems A and B}
Theorems A and B are particularly interesting in the case of
hyperbolic surfaces as they give identities for deformation spaces
of Riemann surfaces with boundary. They also relate in this context
to the McShane and Luo-Tan identities which we describe in the next
section. The Bridgeman-Kahn identity in fact arose from a
generalization of a previous paper of the first named author
\cite{B11} which provided an explicit formula for the function
$F_2(l)$ in Theorem B in terms of the Roger's dilogarithm. We have:
\begin{theorem}{(Theorem $A'$: Basmajian identity for surfaces)}
Let $S$ be a hyperbolic surface with non-empty boundary $\partial S$. Then
$$\mbox{Length}(\partial S) = \sum_{l \in L_S} 2 \log\left(\coth\frac{l}{2}\right).$$
\end{theorem}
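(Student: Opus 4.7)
Following the measure-theoretic philosophy outlined in the introduction, the plan is to decompose $\partial S$, equipped with its length measure, into a countable disjoint union of subintervals $X_\alpha$ indexed by oriented orthogeodesics $\alpha$, together with an exceptional set $Z$ of measure zero, and then to verify that $\Length(X_\alpha)=2\log\coth(l(\alpha)/2)$. To set this up I would lift a boundary component $\gamma$ to a geodesic $\tilde\gamma$ in $\Hyp^2$, fix a fundamental domain $F\subset\tilde\gamma$ for $\operatorname{Stab}(\gamma)\cong\Z$, and let $H^+$ be the open half-plane bounded by $\tilde\gamma$ on the interior side. For each $p\in F$ the inward perpendicular geodesic ray into $H^+$ either hits some other lift $\tilde\beta$ of $\partial S$ at a finite first time, or stays in the universal cover of $S$ forever (equivalently, has ideal endpoint in the limit set of $\Gamma=\pi_1(S)$); the lifts $\tilde\beta$ arising in the first case, viewed modulo $\operatorname{Stab}(\gamma)$, are precisely those whose common perpendicular to $\tilde\gamma$ is unobstructed by any other boundary lift, and these are in bijection with oriented orthogeodesics starting at $\gamma$.

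The core length computation is elementary in the upper half-plane model. Place $\tilde\gamma$ as the $y$-axis and $\tilde\beta$ as the Euclidean semicircle over an interval $[a,b]$ with $0<a<b$. The hyperbolic perpendicular to $\tilde\gamma$ at $(0,y_0)$ is the Euclidean semicircle of radius $y_0$ centered at the origin; this meets $\tilde\beta$ precisely when $y_0\in[a,b]$, so the shadow of $\tilde\beta$ on $\tilde\gamma$ is the arc $\{(0,y):a\le y\le b\}$, of hyperbolic length $\log(b/a)$. A short computation of the common perpendicular between the two circles gives $\coth(l/2)=\sqrt{b/a}$, whence the shadow has hyperbolic length $2\log\coth(l/2)$. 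To conclude that these shadows tile $F$ one must check that the shadows of distinct visible lifts are pairwise disjoint. Since two disjoint geodesics in $\Hyp^2$ have either separated or nested pairs of ideal endpoints, overlapping shadows would force a nested configuration; a direct check in coordinates shows that in any nested configuration the common perpendicular from $\tilde\gamma$ to the inner lift crosses the outer one, so the inner lift is obstructed and does not contribute an orthogeodesic. Consequently each visible shadow equals its first-hit set $X_{\tilde\beta}$, and the $X_{\tilde\beta}$ are pairwise disjoint.

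It remains to observe that the exceptional set $Z$ has measure zero. Under the smooth foot-of-perpendicular map $\tilde\gamma\to\partial\Hyp^2$, the set $Z$ corresponds to the limit set $\Lambda$ of $\Gamma$; since $S$ is compact with totally geodesic boundary, $\Gamma$ is convex-cocompact, so $\Lambda$ is a Cantor set of Hausdorff dimension strictly less than one, and in particular has measure zero in $\partial\Hyp^2$. The tautological decomposition $\Length(F)=\Length(Z)+\sum_{\tilde\beta}\Length(X_{\tilde\beta})$ now reads $\Length(\gamma)=\sum_\alpha 2\log\coth(l(\alpha)/2)$, summed over oriented orthogeodesics starting at $\gamma$; summing over all boundary components, and noting that each oriented orthogeodesic has a unique starting endpoint, yields the stated identity. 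The main conceptual hurdle in my view is the disjointness/obstruction step: naively one might try to identify $X_{\tilde\beta}$ with every $p$ whose perpendicular ever meets $\tilde\beta$, but that overcounts, and the nested/obstruction picture is exactly what corrects for it.
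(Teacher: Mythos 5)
Your proposal is correct and takes essentially the same route as the paper: the same decomposition of the boundary (pulled up to a fundamental domain on a boundary lift) into orthogonal-projection shadows indexed by oriented orthogeodesics, with the limit set supplying the measure-zero remainder, and the same length computation $2\log\coth(l/2)$ — which the paper extracts from the quadrilateral relation $\sinh(r(\alpha))\sinh(l(\alpha))=1$ rather than your explicit upper half-plane coordinates. One remark: your nesting/obstruction step is actually vacuous, since distinct lifts of $\partial S$ bound pairwise disjoint half-planes exterior to the convex universal cover, so nested configurations never occur; still, your check is harmless and usefully makes explicit the disjointness and the equality of each first-hit set with the \emph{full} projection shadow, a point the paper passes over silently by phrasing the decomposition in terms of homotopy-rel-boundary equivalence classes.
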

\begin{theorem}{(Theorem $B'$: Bridgeman identity)}
Let $S$ be a hyperbolic surface with non-empty boundary $\partial S$. Then
$$\Vol(T_1(S)) = 2\pi\mbox{Area}(S) = \sum_{l \in L_S} 4.{\mathcal R}\left(\sech^2\frac{l}{2}\right)$$
where ${\mathcal R}$ is the Rogers dilogarithm function.
\end{theorem}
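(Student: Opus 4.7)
The plan is to realise this identity as an instance of the measure-theoretic decomposition framework sketched in the introduction, applied to $X=T_1(S)$ equipped with Liouville measure $\mu$, for which
$$\mu(X)=\Vol(T_1(S))=2\pi\,\mbox{Area}(S).$$
To each $v\in T_1(S)$ I associate the maximal geodesic $\gamma_v$ in $S$ through $v$. For generic $v$ this geodesic exits $\partial S$ in both directions, so the resulting arc is properly homotopic rel $\partial S$ to a unique oriented orthogeodesic $\alpha(v)\in O_S$. This produces the decomposition
$$T_1(S)=Z\ \sqcup\ \bigsqcup_{\alpha\in O_S}X_\alpha,\qquad X_\alpha:=\{v\in T_1(S):\alpha(v)=\alpha\},$$
where $Z$ is the set of unit vectors whose forward or backward orbit never exits $S$.

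The first step is to verify that $\mu(Z)=0$. Since $S$ is a compact hyperbolic surface with non-empty totally geodesic boundary, $\pi_1(S)$ is a convex-cocompact Fuchsian group whose limit set is a Cantor set of Lebesgue measure zero in $\partial\Hyp^2$. The set $Z$ lifts to unit vectors tangent to geodesics both of whose endpoints lie in this limit set, and such a set has Liouville measure zero by a standard Fubini argument on the space of geodesics. The tautology $\mu(X)=\sum_{\alpha\in O_S}\mu(X_\alpha)$ then reduces the theorem to computing $\mu(X_\alpha)$, which, because $X_\alpha$ is determined by an arbitrarily thin tubular neighbourhood of $\alpha$, depends only on $l=l(\alpha)$.

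To carry out this computation, I would lift to $\Hyp^2$: the two boundary components containing the endpoints of $\alpha$ lift to disjoint geodesics $\beta_1,\beta_2$ whose common perpendicular has length $l$. Disintegrating Liouville measure on $T_1(\Hyp^2)$ as the transverse measure on geodesics $d\mu_\Hyp=dx\,dy/(x-y)^2$ times arc length along each geodesic gives
$$\mu(X_\alpha)=\int_{\gamma:\beta_1\to\beta_2}L(\gamma)\,d\mu_\Hyp(\gamma),$$
where $L(\gamma)$ is the length of the segment of $\gamma$ between its intersections with $\beta_1$ and $\beta_2$, and the integration is over geodesics joining $\beta_1$ to $\beta_2$ in the correct homotopy class. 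Placing $\beta_1,\beta_2$ in standard position in the upper half plane turns this into a concrete double integral over a rectangle of boundary parameters, with $L(\gamma)$ given by an explicit cross-ratio formula.

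The main obstacle is evaluating this integral in closed form. After the substitution $u=\sech^2(l/2)$, the elementary antiderivatives produce a combination of logarithms and a classical dilogarithm; one must then apply functional equations for the Rogers dilogarithm (principally the five-term and inversion relations) to collapse all of these pieces into the single term $4\mathcal{R}(\sech^2(l/2))$. This identification is the genuinely computational heart of the argument and amounts to the explicit determination of the function $F_2$ of Theorem B in the two-dimensional case, carried out in \cite{B11}. Summing the resulting contributions over $\alpha\in O_S$ then yields the stated identity.
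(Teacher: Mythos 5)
Your proposal is correct, and its first half coincides exactly with the paper's argument: taking $X=T_1(S)$ with Liouville measure, discarding the measure-zero set $Z$ of vectors whose forward or backward orbit never reaches $\partial S$ (limit set of Lebesgue measure zero), partitioning the remainder into classes $X_\alpha$ indexed by oriented orthogeodesics via homotopy rel boundary, and reducing $\mu(X_\alpha)$ to an integral of the chord length $L$ over the geodesics crossing the two lifted boundary geodesics is precisely the paper's derivation of formula (\ref{bk_int_form}) specialized to $n=2$. Where you diverge is the final, computational step. You propose to evaluate the double integral directly and then use functional equations of the dilogarithm to collapse the answer into $4{\mathcal R}\left(\sech^2\frac{l}{2}\right)$; this is the route of the original paper \cite{B11}, which the survey explicitly declines to follow. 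Instead, the survey avoids the integral altogether: writing $F_2(l)=8R\left(\sech^2\left(\frac{l}{2}\right)\right)$ for an unknown function $R$, it applies the identity to ideal $n$-gons, where the orthospectrum is finite and the ortholengths are cross-ratios of the ideal vertices, obtaining $\sum_{|i-j|\ge 2} R\left([x_i,x_{i+1};x_j,x_{j+1}]\right)=\frac{(n-3)\pi^2}{6}$; the quadrilateral case yields Euler's reflection identity $R(x)+R(1-x)=\frac{\pi^2}{6}$ and the pentagon case yields Abel's five-term identity, whereupon Dupont's characterization of the Rogers dilogarithm \cite{Du87}, as observed by Calegari \cite{Cal10a}, forces $R={\mathcal R}$. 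Your route is self-contained modulo a genuinely messy integral; the survey's route buys a conceptual explanation of why ${\mathcal R}$ must appear, at the cost of invoking Dupont's theorem. Two small points to tighten in your write-up: the reason $\mu(X_\alpha)$ depends only on $l(\alpha)$ is not that $X_\alpha$ is determined by a thin tubular neighbourhood of $\alpha$ (it is not contained in one), but that its lift $\tilde E_\alpha$ is determined, up to isometry of $\Hyp^2$, by the two boundary lifts at distance $l$; and you should track the normalization $d\Omega = \frac{2\,dV_x\,dV_y\,dt}{|x-y|^{2n-2}}$ together with the orientation convention on orthogeodesics, since this factor is exactly what makes the summand $4{\mathcal R}$ per oriented orthogeodesic (the constant is $8$ per unoriented orthogeodesic in \cite{B11}).
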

The function ${\mathcal R}$ was introduced by Rogers in his 1907 paper \cite{Rog07}.  This function arises in hyperbolic volume calculations; the imaginary part of  ${\mathcal R}(z)$ is the volume of an ideal tetrahedron with vertices having cross-ratio $z$.
\subsection{Alternative derivations of the Bridgeman and Bridgeman-Kahn Identity}
In the paper \cite{Cal10a}, Calegari gives an alternate derivation
of the Bridgeman identity. Also in \cite{Cal10b}, Calegari derived
an orthospectrum  identity for all dimensions $\ge 2$ which arose
from a different decomposition of the unit tangent bundle, he showed in the surface case that
it is equal to the Bridgeman identity. In a recent preprint
\cite{MasMcS} by Masai and McShane, it was shown that for higher
dimensions, it is also equal to the Bridgeman-Kahn identity.

\section{Two Simple Spectra identities for hyperbolic surfaces}
Let $S$ be a finite area hyperbolic surface, we will consider
various cases including when $S$ has cusps, totally geodesic
boundary,  cone singularities (with cone angles $\le \pi$), and
finally, when $S$ is a closed surface. We saw already in the
previous section that when $S$ has non-empty totally geodesic
boundary, we can define the collection of orthogeodesics $O_S$,
which provided an index set for the Basmajian and Bridgeman
identities, which are then expressed in terms of the ortholength
spectrum $L_S$. This set can be extended in a natural way for
surfaces which also have cusps or cone singularities. For the
purposes of the next two classes of identities however, it is more
useful to consider the smaller collection $SO_S$ of {\em simple
orthogeodesics}, that is, orthogeodesics which do not have self
intersection, and also the collection $SG_S$ of {\em simple closed
geodesics} on $S$. We will  see that $SO_S$, together with collections of certain subsets of $SG_S$
consisting of one, two or three disjoint geodesics satisfying some
topological criteria  will be useful as
index sets for the identities.
\subsection{(Generalized) McShane Identity}
Let $S_{g,n}$ denote a hyperbolic surface of genus $g$ with $n$
cusps. In his 1991 thesis, {\em A remarkable identity for lengths of
curves}, McShane proved an identity for the lengths of simple closed
geodesics on any once-punctured hyperbolic torus $S_{1,1}$ which he
generalized later in \cite{McS98} to more general cusped hyperbolic
surfaces  $S_{g,n}$, $n \ge 1$.
\begin{theorem}{(Theorem C: McShane Identity, \cite{McS91}, \cite{McS98})}
\begin{enumerate}
\item If $S_{1,1}$ is a  hyperbolic torus with one cusp, then
$$\sum_{\gamma} \frac{1}{1+e^{l(\gamma)}} = \frac{1}{2}$$
where the sum is over all simple closed geodesics $\gamma$ in $S_{1,1}$.
\item If $S_{g,n}$ is a  hyperbolic surface of genus $g$ with $n$ cusps, where $n \ge 1$, then
$$\sum_{\gamma_1, \gamma_2} \frac{1}{1+exp(\frac{l(\gamma_1)+l(\gamma_2)}{2})} = \frac{1}{2}$$
where the sum is over all unordered pairs of simple closed geodesics
$\{\gamma_1, \gamma_2\}$ which bound together with a fixed cusp an
embedded pair of pants in $S_{g,n}$. Here we adopt the convention
that $\gamma_1$ or $\gamma_2$ may be one of the other cusps on
$S_{g,n}$, considered as a (degenerate) geodesic of length $0$.
\end{enumerate}
 \end{theorem}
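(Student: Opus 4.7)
The plan is to instantiate the general framework of the introduction on the cusped surface $S_{g,n}$: pick a finite-measure set $X$ associated to the distinguished cusp $c$, use hyperbolic dynamics to decompose $X$ up to a measure-zero set into arcs indexed by the embedded pairs of pants through $c$, and compute the arc lengths in closed form. Concretely, let $X=h$ be an embedded horocycle around $c$. Parameterise $X$ by the inward-pointing unit normal bundle of $h$: for each $p\in h$ let $\gamma_p$ be the unit-speed geodesic ray that leaves $p$ perpendicular to $h$ into $S$.

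The first step is a trichotomy for $\gamma_p$. Either (a) $\gamma_p$ is a complete simple geodesic, (b) $\gamma_p$ eventually returns to the cusp $c$, or (c) $\gamma_p$ has a first self-intersection. The set of $p$ of type (a) is the $Z$ of the introduction: by the Birman--Series theorem the union of all complete simple geodesics on a finite-area hyperbolic surface has Hausdorff dimension one, hence the intersection with $h$ has Lebesgue measure zero. The set of $p$ of type (b) is countable (one point per simple orthogeodesic from $c$ to itself), and so also has measure zero. All the mass of $h$ therefore lives in type (c).

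For $p$ of type (c), truncate $\gamma_p$ at its first self-intersection to produce a loop in $S$ based at that intersection point; sliding this loop along the initial segment of $\gamma_p$ back to the cusp produces an embedded loop at $c$. A standard surface-topology argument shows that this loop, together with a sub-arc of $h$, cuts off an embedded topological pair of pants $P_p\subset S$ whose other two boundary components are freely homotopic either to simple closed geodesics or to cusps of $S$; call their (geodesic) lengths $l(\gamma_1), l(\gamma_2)$ (with the convention $l=0$ for a cusp). The map $p\mapsto P_p$ partitions the full-measure set into the fibres
\[
X_P = \{\,p\in h : P_p = P\,\},
\]
indexed by embedded pairs of pants $P$ through $c$ with the prescribed boundary type.

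The remaining, and main, work is to compute $\ell(X_P)$. Because the defining condition is local to $P$, one may replace $S$ by $P$ and work in the universal cover of $P$. The boundary of $X_P$ in $h$ consists of points whose geodesic ray spirals onto $\gamma_1$ or $\gamma_2$; these are precisely the limits of intervals on $h$ whose rays cross over to the opposite cuff. A direct computation in the upper half-plane, using the standard formulae for feet of perpendiculars from a horocycle to the two boundary geodesics of a pair of pants, yields
\[
\ell(X_P) \;=\; \ell(h)\cdot \frac{1}{1+\exp\!\bigl(\tfrac{l(\gamma_1)+l(\gamma_2)}{2}\bigr)},
\]
with the degenerate cases $l(\gamma_i)=0$ (cusp) covered by continuity. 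Summing the tautology $\ell(h)=\sum_P \ell(X_P)$ over pants and dividing by $\ell(h)$ gives statement (2). Statement (1) is the same argument: on $S_{1,1}$ every simple closed geodesic $\gamma$ bounds with the unique cusp a pair of pants whose other two boundary curves are both $\gamma$, so the sum collapses to a single sum indexed by simple closed geodesics with term $1/(1+e^{l(\gamma)})$.

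I expect the hardest step to be the geometric/topological one: showing that the fibres $X_P$ really are unions of finitely many arcs whose endpoints account for all spiralling behaviour, so that the disjoint union $\bigsqcup_P X_P$ covers $h$ up to the measure-zero simple locus with no overcounting. Once this structural statement is in hand, the invocation of Birman--Series and the hyperbolic trigonometry inside a single pair of pants are routine.
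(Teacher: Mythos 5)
Your overall strategy --- a horocycle around the distinguished cusp as the finite-measure set $X$, Birman--Series to dispose of the simple locus, assignment of a pair of pants via a regular neighbourhood, and gap widths computed from the spiralling perpendicular rays --- is exactly the paper's, but your trichotomy contains a genuine error that breaks the decomposition. The set of $p\in h$ whose perpendicular ray returns to the cusp region while still simple is \emph{not} countable: if $\delta$ is a simple orthogeodesic from $h$ to itself contained in an embedded pair of pants, then every $p$ in a small open interval around a foot of $\delta$ has a ray that stays close to the compact arc $\delta$ and hence crosses $h$ again transversally, without self-intersecting. This set has positive measure, and in McShane's analysis (and the paper's) it makes up part of every gap $X_P$. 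The correct construction truncates the ray at its first self-intersection \emph{or} at its first return to the horocycle, whichever comes first; the full-measure locus then splits into lassos and simple arcs from $h$ to itself, and \emph{both} types are assigned to a pants by the cut-and-tighten (convex core) argument. Because you truncate only at self-intersections, a ray near $\delta$ re-enters the cusp region, comes back out --- possibly into a different pair of pants --- and produces its first loop there; so your fibre $X_P=\{p : P_p=P\}$ is not a finite union of arcs bounded by spiralling rays, and your closed-form evaluation of $\ell(X_P)$ (taken between the spiralling limits) computes the measure of a different, larger set than the one your definition actually carves out.

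A visible symptom of the missing mass is your constant. In each pair of pants the gap consists of \emph{two} intervals on $h$ (one on each side, between the rays spiralling onto $\gamma_1$ and onto $\gamma_2$), of total width
\[
\ell(X_P)\;=\;\ell(h)\cdot\frac{2}{1+\exp\!\left(\frac{l(\gamma_1)+l(\gamma_2)}{2}\right)},
\]
so the tautology $\ell(h)=\sum_P\ell(X_P)$ gives $\sum_P 2\big/\big(1+\exp(\cdot)\big)=1$, which is precisely the theorem's value $\tfrac12$ for the sum in the statement. Your formula $\ell(X_P)=\ell(h)\big/\big(1+\exp(\cdot)\big)$, combined with your own tautology, would instead yield $\sum 1\big/\big(1+\exp(\cdot)\big)=1$, twice the correct value; the same factor-of-two defect appears in your punctured-torus case. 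Both problems disappear once the simple returning arcs are counted inside the gaps and the ray is stopped at the horocycle, as in the paper's proof.
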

The case of the punctured torus can be regarded as a special case of
the surface $S_{g,n}$ where in the sum, $\gamma=\gamma_1=\gamma_2$
since any simple closed geodesic $\gamma$ on $S_{1,1}$ cuts it into
a pair of pants.

 In her 2005 thesis {\em Simple geodesics and Weil-Petersson volumes of moduli spaces of bordered Riemann surfaces},
 see \cite{Mir05, Mir07}, Mirzakhani gave a general version of the McShane identity for hyperbolic surfaces with
 geodesic boundaries and cusps, which was an important tool for her computation of the Weil-Petersson volumes of
 the moduli spaces. Independently, Ying Zhang in his 2004 thesis {\em Hyperbolic cone surfaces, generalized
 Markoff Maps, Schottky groups and McShane's identity}, see \cite{Zhang04, TWZ06}, also gave a generalization
 of the McShane identity for surfaces, with (non-empty) boundary consisting of cusps, totally geodesic boundaries,
 or cone singularities with cone angle $\le \pi$, with slightly different forms for the functions involved in the
 identity.
We first state Mirzakhani's generalization  and explain how to interprete the identity for cone surfaces later.
\begin{theorem}{(Theorem $C'$: Generalized McShane-Mirzakhani Identity, \cite{Mir05, Mir07})}
Let $S$ be a finite area hyperbolic surface with geodesic boundary components $\beta_1,\ldots,\beta_n$, of length $L_1,\ldots,L_n$.
Then
$$\sum_{\{\gamma_1, \gamma_2\}} D(L_1, l(\gamma_1), l(\gamma_2)) + \sum_{i=2}^n \sum_{\gamma} R(L_1,L_i,l(\gamma)) = L_1$$
where the first sum is over all unordered pairs of (interior) simple
closed geodesics bounding a pair of pants with $\beta_1$, and the
second sum is over simple closed geodesics bounding a pair of pants
with $\beta_1, \beta_i$ and
$$D(x,y,z) = 2\log\left(\frac{e^{\frac{x}{2}}+e^{\frac{y+z}{2}}}{e^{-\frac{x}{2}}+e^{\frac{y+z}{2}}}\right), ~~ R(x,y,z) = x- \log\left(\frac{\cosh(\frac{y}{2})+\cosh(\frac{x+z}{2})}{\cosh(\frac{y}{2})+\cosh(\frac{x-z}{2})}\right).$$
 \end{theorem}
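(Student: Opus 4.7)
The plan is to apply the measure-decomposition framework from the introduction with $X = \beta_1$ and $\mu$ the length measure on $\beta_1$, so that $\mu(X) = L_1$. For each $p \in \beta_1$, let $\eta_p$ be the geodesic ray perpendicular to $\beta_1$ at $p$ shot into the interior of $S$, and follow $\eta_p$ until it first returns to $\partial S$. When this happens with $\eta_p$ remaining simple up to that point, a regular neighbourhood of $\eta_p$ together with $\beta_1$ and the terminal boundary component determines a canonically associated embedded pair of pants $P(p) \subset S$ having $\beta_1$ as one of its three boundary components.

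The geometric classification of such embedded pants yields exactly two families. If both endpoints of $\eta_p$ lie on $\beta_1$, then the two remaining boundaries of $P(p)$ are interior simple closed geodesics $\gamma_1, \gamma_2$; such $p$ contribute to the first sum, indexed by unordered pairs $\{\gamma_1, \gamma_2\}$ bounding a pants with $\beta_1$. If the other endpoint of $\eta_p$ lies on some $\beta_i$ with $i \neq 1$, then $P(p)$ is bounded by $\beta_1$, $\beta_i$, and one interior simple closed geodesic $\gamma$; such $p$ contribute the second sum. The exceptional set $Z \subset \beta_1$ on which $P(p)$ is undefined consists of points whose $\eta_p$ either never meets $\partial S$ or fails to remain simple up to its first return; in either case the forward extension of $\eta_p$ lies on a complete simple geodesic in $S$, and by the Birman--Series theorem the set of points of $S$ supported on complete simple geodesics has Hausdorff dimension one, whence $\mu(Z) = 0$.

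For each fixed embedded pair of pants $P$, the preimage $X_P := \{\, p \in \beta_1 : P(p) = P \,\}$ is a finite union of intervals in $\beta_1 \cap P$, and hyperbolic trigonometry within the single pair of pants $P$ gives the closed-form length
$$\mu(X_P) = \begin{cases} D(L_1, l(\gamma_1), l(\gamma_2)) & \text{if } \partial P = \beta_1 \cup \gamma_1 \cup \gamma_2, \\ R(L_1, L_i, l(\gamma)) & \text{if } \partial P = \beta_1 \cup \beta_i \cup \gamma. \end{cases}$$
The tautological identity $L_1 = \mu(\beta_1 \setminus Z) = \sum_P \mu(X_P)$ then yields the claimed formula upon summing each type separately.

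The main obstacles are twofold. First, one must verify that $p \mapsto P(p)$ is well defined on $\beta_1 \setminus Z$ and is constant on each connected component of that set, which requires showing that a simple orthogeodesic issuing perpendicularly from $\beta_1$ admits a unique maximal embedded-pants thickening; this is a geometric topology argument about how pairs of pants in $S$ containing a fixed arc can be grown outward. Second, and most technical, is the explicit pants-trigonometry computation of $\mu(X_P)$: one decomposes $P$ along its three seams into two right-angled hexagons and applies the hyperbolic cosine rule to obtain the lengths of the subintervals of $\beta_1$ whose perpendiculars land in the correct portions of $\partial P$, producing the closed-form functions $D$ and $R$. Both steps are carried out in detail in \cite{Mir07}.
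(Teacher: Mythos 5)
Your overall framework (decompose $\beta_1$, index the positive-measure pieces by embedded pairs of pants containing $\beta_1$, compute $D$ and $R$ by pants trigonometry) matches the paper, but your stopping rule contains a fatal error that destroys the decomposition. You follow the perpendicular ray $\eta_p$ until it returns to $\partial S$ and declare $P(p)$ undefined whenever $\eta_p$ ``fails to remain simple up to its first return,'' placing all such $p$ into the exceptional set $Z$ and claiming $\mu(Z)=0$ by Birman--Series. This is false: a ray that self-intersects before reaching $\partial S$ does \emph{not} lie on a complete simple geodesic, so Birman--Series says nothing about it, and in fact the set of such $p$ has \emph{positive} measure --- it constitutes the bulk of the contribution $D(L_1,\ell(\gamma_1),\ell(\gamma_2))$ in the first sum. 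With your definition, the sum $\sum_P \mu(X_P)$ falls strictly short of $L_1$, and the individual $\mu(X_P)$ would not equal $D$ either.

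The missing idea, which is the heart of McShane's argument as presented in the paper, is to stop the geodesic at its \emph{first self-intersection} or at $\partial S$, whichever comes first: the resulting arc $G_v$ is either a simple arc from $\beta_1$ to $\partial S$ or a ``lasso'' (an arc ending in an embedded loop). A lasso still determines a unique embedded pair of pants: one takes a regular neighborhood of $\beta_1 \cup G_v$ and tightens its other boundary components to geodesics $\gamma_1,\gamma_2$ (the paper does this by cutting $S$ along $G_v$ and passing to the convex core of the cut surface). In the paper's interval picture on $\beta_1$ with the four spiraling points $p_1,q_1,q_2,p_2$, the intervals $(p_1,q_1)\cup(q_2,p_2)$ consist of $p$ whose perpendicular is \emph{either} a simple arc from $\beta_1$ to itself \emph{or} self-intersects to form a lasso, and both types are (and must be) counted toward $\mu(X_P)=D$. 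The genuinely measure-zero set $Z$ consists only of the $v$ whose arc $G_v$ is simple and of infinite length; for the geodesic-boundary case of Theorem~$C'$ even this follows from the limit set having measure zero, with Birman--Series needed only when $\beta_1$ is a cusp (or, in generalized form, a cone point). Your second sum and the uniqueness/tightening discussion are otherwise in the right spirit, but as written the proof proves a strict inequality, not the identity.
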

The case where some of the other boundaries are cusps but $\beta_1$ is a geodesic boundary can be deduced from the above, by considering cusps to be boundaries of length $0$, where again we adopt the convention that a cusp may be regarded as a geodesic of length $0$ in the summands above. The case where $\beta_1$ is also a cusp, that is, $L_1=0$ is more interesting, in this case, the original McShane identities can be deduced from the above by taking the infinitesimal of the limit as $L_1 \rightarrow 0$,  or taking the formal derivative of the above identity with respect to $L_1$ and evaluating at $L_1=0$.
More interestingly, a cone singularity of cone angle $\theta$ may be regarded as a boundary component with purely imaginary complex length $i\theta$, and the above identity is also valid if some of the boundary components are cone singularities of cone angles $\le \pi$ as shown in \cite{Zhang04, TWZ06}. The restriction to cone angles $\le \pi$ is necessary in the argument, this guarantees a convexity property and the existence of geodesic representatives for essential simple closed curves on the surface. For example, if $S$ is a surface of genus $g>1$ with one cone singularity $\Delta$ of cone angle $\theta$, then we have
$$\sum_{\{\gamma_1, \gamma_2\}} D(i\theta, l(\gamma_1), l(\gamma_2))   = i\theta$$
where $\{\gamma_1, \gamma_2\}$ are unordered pairs of simple closed
geodesics bounding a pair of pants with $\Delta$.
We note that each of the summands of the identity in Theorem  $C'$ is
the measure of some subset $X_i$ of $\beta_1$: the summands in
the first sum of Theorem $C'$ correspond to sets which are indexed
by (a subset of) the simple orthogeodesics from $\beta_1$ to itself, the summands in
the second sum correspond to sets which are indexed by simple
orthogeodesics from $\beta_1$ to $\beta_i$ as we will see in the
proof later.
\subsection{ Luo-Tan identity}
The Basmajian, Bridgeman and McShane identities for surfaces were in
general only valid for surfaces with boundary; in the first two
cases, for surfaces with geodesic boundary, in the third case, to
surfaces with at least some cusp or cone singularity. They do not
extend to general closed surfaces without boundary. However, for the
genus $2$ hyperbolic surface $S_2$, by considering $S_2/hyp$ where
$hyp$ is the hyper-elliptic involution on $S_2$, one may lift the
identity on the cone surface $S_2/hyp$ to  obtain identities on the
closed genus two surface $S_2$, see \cite{McS06, TWZ06}. This method
however does not generalize to higher genus. In their 2011 preprint,
{\em A dilogarithm identity on moduli spaces of curves}, F. Luo and
the second author derived the following identity for closed
hyperbolic surfaces.
\begin{theorem}{(Theorem $D$: Luo-Tan identity, \cite{LT11})}
Let $S$ be a closed hyperbolic surface. There exist functions $f$ and $g$ involving the dilogarithm of the lengths of the simple geodesic loops in a 3-holed sphere or 1-holed torus, such that
$$\Vol(T_1(S)) = \sum_{P}f(P) + \sum_{T} g(T) $$
where the first sum is over all properly embedded 3-holed spheres $P
\subset S$ with geodesic boundary, the second sum is over all
properly embedded 1-holed tori $T \subset S$ with geodesic boundary.
\end{theorem}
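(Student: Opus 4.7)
Following the unifying strategy described in the introduction, I take $X = T_1(S)$ equipped with the Liouville measure, so $\mu(X) = \Vol(T_1(S))$, and aim to produce a measure-theoretic decomposition of the form
$$T_1(S) = Z \sqcup \bigsqcup_{P} X_P \sqcup \bigsqcup_{T} X_T,$$
where the unions range over all properly embedded 3-holed spheres $P \subset S$ and 1-holed tori $T \subset S$ with geodesic boundary, and $Z$ has Liouville measure zero. Setting $f(P) = \mu(X_P)$ and $g(T) = \mu(X_T)$ then yields the tautological identity; the remaining work is to (i) construct the decomposition, and (ii) show that these measures have the prescribed dilogarithmic form.

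For (i), I would associate to a generic $v \in T_1(S)$ a canonical embedded subsurface $\Sigma(v) \subset S$ by tracking the first few self-intersections of the geodesic $\gamma_v$ through $v$. Running $\gamma_v$ forwards and backwards until a prescribed combinatorial stopping rule is satisfied produces a finite 1-complex in $S$, and a regular neighborhood of this 1-complex, together with the isotopy class of its boundary, yields $\Sigma(v)$. The central topological assertion, which I view as the heart of the argument, is that for generic $v$ this subsurface is either a 3-holed sphere or a 1-holed torus and contains $v$ in its interior. This requires a case analysis of the configurations arising from a small number of self-intersections, together with an argument that higher-complexity configurations contribute only to a measure-zero subset.

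The exceptional set $Z$ then consists of: vectors tangent to a simple closed geodesic (a countable union of smooth circles, hence measure zero); vectors whose trajectory fails to realize the required self-intersection pattern (a measure-zero set by ergodicity of the geodesic flow and density of non-simple orbits on a closed hyperbolic surface); and vectors landing in degenerate configurations which form a countable union of positive-codimension submanifolds. Because the construction only uses a finite piece of $\gamma_v$ contained in $\Sigma(v)$, the measure $\mu(X_P)$ depends only on the isometry type of $P$, and similarly for $T$; this locality is precisely what makes $f$ and $g$ well-defined functions of the local geometry alone.

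For (ii), each $X_P$ (resp.\ $X_T$) splits further according to the combinatorial type of how $\gamma_v$ sits inside $P$ (resp.\ $T$), and $\mu(X_P)$ is obtained by integrating the Liouville measure over all vectors realizing each such type. By the same mechanism that produces the Rogers dilogarithm in Theorem $B'$, these integrals evaluate to dilogarithmic expressions in the lengths of the boundary curves and interior simple loops of $P$ or $T$. The main obstacle in the whole program is the topological step (i): proving that the canonical subsurface assignment $v \mapsto \Sigma(v)$ partitions $T_1(S)$ up to measure zero with only the two advertised topological types appearing. Once that structural fact is in place, the explicit measure computations in (ii) are intricate but essentially mechanical hyperbolic trigonometry in the spirit of Theorem $B'$.
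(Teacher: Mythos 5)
Your outline coincides with the paper's actual strategy (take $X=T_1(S)$ with the volume measure, assign to a generic $v$ a canonical subsurface via self-intersections of the orbit, get a partition indexed by embedded $\chi=-1$ subsurfaces, set $f(P)=\mu(X_P)$, $g(T)=\mu(X_T)$), but you have left unproven exactly the two points where the paper has to do real work, and at one of them your proposed route is genuinely weaker. First, your ``prescribed combinatorial stopping rule'' is never specified, and your plan to handle it --- a case analysis of configurations with few self-intersections plus an argument that ``higher-complexity configurations'' are measure zero --- is not how the dichotomy (3-holed sphere or 1-holed torus) arises, and it is not clear it can be made to work that way. The paper's rule is precise: grow $\gamma_v^-$ and $\gamma_v^+$ at equal speed, stop the first ray at the first time $t_+$ the combined arc fails to be simple, then continue the other ray alone until the next incidence time $t_-$. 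By construction the resulting geodesic graph $G_v=\gamma_v^-([0,t_-])\cup\gamma_v^+([0,t_+])$ has Euler characteristic exactly $-1$ for every generic $v$; no high-complexity configurations ever occur and no case analysis over intersection patterns is needed --- the topological type of the ambient subsurface is forced. Second, and more seriously, you assert that $v\mapsto\Sigma(v)$ partitions $T_1(S)$ up to measure zero but give no mechanism for \emph{uniqueness} of the subsurface containing $G_v$; without uniqueness the sets $X_P$, $X_T$ could overlap and the identity would overcount. This is Proposition 3.1 of Luo--Tan: one cuts $S$ along $G_v$, passes to the convex core of the completion $\hat S$, shows $G_v$ lies in the union of the convex annuli between each piecewise-geodesic boundary loop and its geodesic tightening, and rules out a second simple geometric subsurface containing $G_v$ by a hyperbolic-bigon argument (using essentially that both subsurfaces are embedded with geodesic boundary). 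A regular neighborhood plus ``isotopy class of its boundary,'' as you propose, does not by itself deliver this uniqueness.

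On step (ii), your plan to decompose $X_P$ directly ``according to the combinatorial type of how $\gamma_v$ sits inside $P$'' runs into the problem that the paper explicitly sidesteps: the vectors in $X_P$ realize infinitely many and complicated configurations, and a direct integration is intractable. The paper instead computes $\mu(T_1(P)\setminus X_P)$, whose elements fall into \emph{finitely} many simple types --- arcs homotopic rel boundary to the simple orthogeodesics $M_i$ or $N_i$ (with measures $8\,{\mathcal R}\bigl(\sech^2(m_i/2)\bigr)$ and $8\,{\mathcal R}\bigl(\sech^2(n_i/2)\bigr)$ from Bridgeman's surface computation), plus ``true lassos,'' whose measure is a new integral over a box of geodesics in the universal cover, evaluated as the lasso function $La(l,m)$ in Rogers dilogarithms; the one-holed torus case reduces to the pants case by cutting along the unique simple closed geodesic $A$ disjoint from $\hat G_v$ and summing over $A$. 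So the dilogarithms do not come ``by the same mechanism'' as Theorem $B'$ alone: one of the three complementary types is Bridgeman's, but the lasso term requires a separate computation. In short, your proposal is the right skeleton, but the Euler-characteristic stopping rule, the uniqueness (bigon) argument, and the complementary-set computation with the lasso function are the missing ideas that make it a proof.
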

The functions $f$ and $g$ are defined on the moduli
spaces of simple hyperbolic surfaces ($3$-holed spheres and
$1$-holed tori) with geodesic boundary and given in terms of  ${\mathcal R}$,  the Rogers dilogarithm function as follows:

Suppose $P$ is a hyperbolic 3-holed sphere with geodesic
boundaries of lengths $l_1, l_2, l_3$. Let $m_i$ be the length of
the shortest path from the $l_{i+1}$-th boundary to the
$l_{i+2}$-th boundary ($l_4=l_1$, $l_5=l_2$). Then
{\small \begin{equation}\label{eqn:deffp}
f(P): =4 \sum_{i \neq j}\left[2{\mathcal R}\left(\frac{1-x_i}{1-x_i y_j}\right)
-2{\mathcal R}\left(\frac{1-y_j}{1-x_i y_j}\right)-{\mathcal R}(y_j)
-{\mathcal R}\left(\frac{(1-x_i)^2y_j}{(1-y_j)^2 x_i}\right)\right]
\end{equation}}
where  $x_i = e^{-l_i}$ and $y_i=\tanh^2(m_i/2)$.


Suppose $T$ is a hyperbolic 1-holed torus with geodesic boundary.
 For any non-boundary parallel simple closed
geodesic $A$ of length $a$ in $T$, let $m_{A}$ be the distance
between $\partial T$ and $A$. Then $g(T):=$ 
{\small \begin{equation}\label{eqn:definitionofg}
  4\pi^2+8\sum_{A} \left[ 2{\mathcal R}\left(\frac{1-x_A}{1-x_Ay_A}\right) - 2{\mathcal R}\left(\frac{1-y_A}{1-x_Ay_A}\right)
-2 {\mathcal R}(y_A) -{\mathcal R}\left(\frac{(1-x_A)^2y_A}{(1-y_A)^2 x_A}\right)\right]
\end{equation}}
where $x_A = e^{-a}$ and $y_A=\tanh^2(m_A/2)$ and the sum is over
all non-boundary parallel simple closed geodesics $A$ in $T$.

\subsection{Luo-Tan identity for surfaces with boundary and non-orientable
surfaces} The Luo-Tan identity also holds for surfaces $S$ with
geodesic boundary and cusps, however, in this case, the functions $f$ and $g$ need to
be modified when $P$ or $T$ share some boundary component with $S$, similar to the  McShane-Mirzakhani identity for surfaces
with more than one boundary component, see \cite{ LT13}. In this case, the identity is trivial for the one-holed torus -  however,
one can obtain a meaningful identity involving the lengths of the
simple closed geodesics in $T$  by a topological covering
argument, using the identity for a four-holed sphere, see \cite{H13, HT13}. For example, for a once-punctured hyperbolic torus $T$, we obtain
\begin{equation}\label{eqn:LuoTanfortorus}
\sum_{\gamma} \left[{\mathcal R}(\sech^2(l(\gamma)/2))+2\left({\mathcal R}\left(\frac{1+e^{-l(\gamma)}}{2}\right)-{\mathcal R}\left(\frac{1-e^{-l(\gamma)}}{2}\right)\right)\right]=\frac{\pi^2}{2}
\end{equation} where the sum extends over all simple closed geodesics $\gamma$ in $T$.

 For non-orientable surfaces, one also
obtain an analogous identity, in this case, the summands include
terms coming from embedded simple non-orientable surfaces, namely,
one holed Klein bottles and one-holed M\"obius bands, see \cite{LT13}.

\section{Proofs of Theorems A and C - Boundary flow}
As remarked in the introduction, the proofs of all the results will
be based on a decomposition of certain sets $X$ associated to $M$.
In particular for Theorems A and C, $X$  will be subsets of $T_1(M)$
associated to the boundary $\partial M$ and  the proofs arise from consideration of
the boundary flow on $X$.
\subsection{Proof of Basmajian Identity}
Let $T_1(M)$ be the unit tangent bundle of $M$ and $\pi: T_1(M)
\rightarrow M$ be the projection map. For the Basmajian identity, we
let $X$ be the set of unit tangent vectors $v$ whose basepoint are
on $\partial M$, and which are perpendicular to $\partial M$, and
point into the interior of $M$, that is,
 $$X=\{v \in T_1(M)~:~ \pi(v) \in \partial M, v \perp \partial M, v ~\mbox{points into}~ M\}.$$
Clearly, $X$ identifies with $\partial M$ under $\pi$, and we define  the measure $\mu$ on $X$ to be the pullback of Lebesgue measure on $\partial M$ under $\pi$. In
particular, $\mu(X)=\mbox{Vol}(\partial M)$. We
consider the unit speed geodesic $g_v$ starting at $p=\pi(v) \in
\partial M$ obtained by exponentiating $v$. Thus $g_v$  is the geodesic arc
obtained by flowing from $\pi(v)$ until you hit the boundary. To derive the
Basmajian identity, we let $$Z=\{v \in X ~:~ {\mbox{Length}}(g_v)=\infty \}.$$
 It follows from the fact that the limit set of $M$ is measure zero,
that $Z$ has zero volume. For each of the remaining vectors, $g_v$
is a geodesic arc of finite length with endpoints on $\partial M$.
We define an equivalence relation on these by defining $v \sim w$ if
$g_v, g_w$ are homotopic rel. boundary in $M$. Then each oriented
orthogeodesic $\alpha$ defines an equivalence class $E_\alpha$ given
by
$$E_\alpha = \{ v \in X-Z\ |\ g_v \mbox{ is homotopic rel boundary to } \alpha\}.$$
Also, by lifting to the universal cover and using a tightening argument, we see that for every $v
\in X \setminus Z$,  $g_v$ is homotopic rel. boundary to a
orthogeodesic $\alpha$, so
$\{E_\alpha\}_{\alpha \in O_M}$ covers $X \setminus Z$. Furthermore
if $\alpha \neq \beta$ then $E_\alpha \cap E_\beta =\emptyset$ as no
two orthogeodesics are homotopic rel boundary. Thus we have the
partition  $ X= Z \sqcup \bigsqcup_{\alpha} E_\alpha$
and the associated identity
$$\Vol(X) = \Vol(\partial M)=\sum_{\alpha \in O_M} \Vol(E_\alpha)$$
To calculate $\Vol(E_\alpha)$, we lift to the universal cover so
that $\alpha$ lifts to a geodesic arc $\tilde \alpha$ orthogonal to
two boundary hyperplanes $P, Q$. As $\alpha$ is oriented, we assume
$\tilde\alpha$ is oriented from $P$ to $Q$. Thus any $g_v$ homotopic
rel boundary to $\alpha$ has a unique lift $\tilde g_v$ which is a
geodesic arc perpendicular to $P$ going from $P$ to $Q$. Hence
$\tilde g_v$ has basepoint in the orthogonal projection of $Q$ onto
$P$. Thus the set $\pi(E_\alpha)$ lifts to a disk of radius
$r(\alpha)$ given by orthogonal projection of $Q$ onto $P$. Let
$r(\alpha), l(\alpha)$ be the two finite sides of a hyperbolic
quadrilateral with one ideal vertex and finite angles $\pi/2$ (see
figure \ref{basmajian1}). By elementary hyperbolic geometry (see
\cite{Bear95}, for example), $\sinh(r(\alpha)).\sinh(l(\alpha)) =
1$, giving $r(\alpha) = \log(\coth(l(\alpha)/2))$ and  we obtain
$$\Vol(\partial M) = \sum_{\alpha \in O_M} \Vol(E_\alpha) = \sum_{\alpha \in O_M} V_{n-1}(\log(\coth(l(\alpha)/2))).$$
\begin{figure}[htbp] 
   \centering
   \includegraphics[width=3.5in]{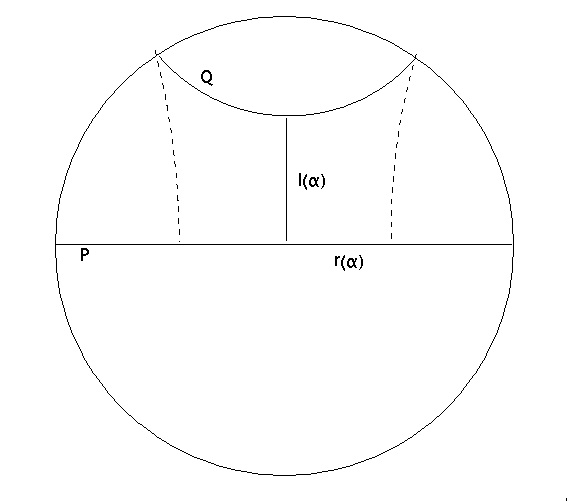}
   \caption{Orthogonal Projection onto a plane}
   \label{basmajian1}
\end{figure}
\noindent{Remark}: The identity generalizes to hyperbolic manifolds
with cusps, as long as the boundary contains some non-empty geodesic
component. This is necessary to deduce that $\mu(Z)=0$.
Othogeodesics ending in a cusp have infinite length and do not
contribute to the summands of the identity.
\subsection{Proof of the Generalized McShane Identity}
We consider a hyperbolic surface $S$ with a finite number of
geodesic boundary components, cusps and cone singularities (with
cone angles $\le \pi$).

For simplicity we first consider the case where $\partial S$ has
only geodesic components $\beta_1, \ldots, \beta_n$ with lengths
$L_1, \ldots, L_n$, the basic idea of the proof is the same for the
more general case, we will explain later how to modify the proof if
some of the $\beta_i$'s are  cusps or cone singularities of cone
angle $\le \pi$. We derive the identity based at $\beta_1$, as such,
define
$$X=\{v \in T_1(S)~:~ \pi(v) \in \beta_1, v \perp \beta_1, v ~\mbox{points into}~ S\}.$$
Clearly, $\pi$ induces a bijection from $X$ to $\beta_1$, so
$\mu(X)= \mbox{Length}(\beta_1)=L_1$. Again, as in the proof of the
Basmajian identity, we are going to consider the unit speed geodesic
obtained by exponentiating $v \in X$, however, this time, we are
going to stop when the geodesic hits itself, or the boundary
$\partial S$. More precisely, let $G_v:[0,T] \rightarrow S$ be the
geodesic arc obtained by exponentiating $v \in X$ such that $G_v$ is
injective on $[0,T)$ and either $G_v(T)=G_v(s)$ for some $s\in
[0,T)$, or $G_v(T)\in \partial S$. If $G_v[0,t]$ is defined and
injective for all $t>0$, then  $T=\infty$, that is $G_v$ is a simple
geodesic arc of infinite length. A good  analogy for the
difference between $g_v$ in the Basmajian proof and
$G_v$ in the McShane proof is that we should consider
$g_v$ as a laser beam starting from $\partial M$ which is
allowed to intersect itself any number of times, until it hits the
boundary, whereas $G_v$ should be thought of as a  wall,
which terminates when it hits itself, or the boundary.

Now let $Z \subset X$ be the set of vectors for which $G_v$ has
infinite length, that is, $T=\infty$. Again, by the same argument as
before, $\mu(Z)=0$ since the limit set of $S$ has measure zero, and
the endpoints of the lifts of $G_v$ must land on the limit set if $v
\in Z$. However, if $\beta$ is a cusp, then we need a stronger
result, namely, the Birman-Series result \cite{BS85} that the set
of simple geodesics on the surface has Hausdorff dimension 1, which
implies that $\mu(Z)=0$. Similarly, if $\beta$ is a cone point, 
we  require a generalization of the Birman-Series result, see
\cite{TWZ06}. We note that $Z$ has a rather complicated Cantor set
structure, and McShane analysed this set carefully in \cite{McS98}.
However, for the purposes of proving the identity, the structure of
$Z$ is irrelevant, and one only really needs to know that
$\mu(Z)=0$.

We now look at $G_v$ for $v \in X \setminus Z$, in this case $G_v$
is either a finite geodesic arc ending in a loop (a lasso), or a simple
geodesic arc from $\beta_1$ to $\partial S$. \begin{itemize}
\item
If $G_v$ is a lasso, or a simple arc ending in $\beta_1$, then a
regular neighborhood $N$ of $\beta_1 \cup G_v$ in $S$ is
topologically a pair of pants, where one of the boundary components
is $\beta_1$. The other two boundary components can then be {\em
tightened} to geodesics $\gamma_1, \gamma_2$ which are {\em
disjoint} and which bound together with $\beta_1$ an embedded pair
of pants in $S$ which {\em contains}  $G_v$ (note that
in the case where $S$ is a one-holed torus, then
$\gamma_1=\gamma_2:=\gamma$,  where $\gamma$ is a simple closed
geodesic on $S$ disjoint from $G_v$, otherwise, $\gamma_1$ and
$\gamma_2$ are distinct and disjoint).

\item If $G_v$ is a simple arc from $\beta_1$ to $\beta_i$ where $i
\neq 1$, then a regular neighborhood of $\beta_1 \cup G_v \cup
\beta_i$ is again a pair of pants, where $\beta_1$ and $\beta_i$ are
two of the boundary components. The third boundary  can again be
tightened to a simple closed geodesic $\gamma$, and again $G_v$ is
contained in the resulting pair of pants.
\end{itemize}

\medskip

One can prove the assertions in the previous paragraph by a cut and
paste argument as follows: Cut
$S$ along $G_v$ to obtain a (not necessarily connected) convex
hyperbolic surface $S_{cut}$ with either  two piece-wise geodesic
boundaries (if $G_v$ is a lasso or simple arc from $\beta_1$ to
itself), or one piece-wise geodesic boundary (if $G_v$ is a simple
arc from $\beta_1$ to $\beta_i$, $i \neq 1$), and other geodesic
boundaries. Note that if $S$ is a one-holed torus then $S_{cut}$ is a cylinder
whose core is a geodesic $\gamma$ disjoint from $\partial S_{cut}$.
Otherwise, in the first case, let $\gamma_1$ and $\gamma_2$ be the
two disjoint geodesics which bound the convex core of $S_{cut}$
which again are disjoint from $\partial S_{cut}$. Regluing along
$G_v$, we see that $\gamma_1$ and $\gamma_2$ bound together with
$\beta_1$ a pair of pants in $S$ (basically the complement of the
convex core of $S_{cut}$) which contains $G_v$, as asserted. The
same argument applies to the second case to obtain a pair of pants
bounded by $\beta_1, \beta_i$ and a geodesic $\gamma$.

\medskip

To recap, for every $v\in X \setminus Z$, $G_v$ is a geodesic arc
contained in a {\em unique} pair of pants embedded in $S$ bounded by
$\beta_1$ and a pair of geodesics $\gamma_1$, $\gamma_2$ (where one of
$\gamma_1, \gamma_2$ may be a different boundary component $\beta_i$ of $S$).
Let $\mathcal P$ be the set of all such pairs of pants
embedded in $S$ (equivalently, all unordered pairs of geodesics
$\{\gamma_1, \gamma_2\}$ in $S$ which bound a pair of pants with
$\beta_1$), and for each $P \in {\mathcal P}$, we define
$$X_P=\{v \in X \setminus Z ~:~ G_v \subset P\},$$ then  $X$
is the disjoint union of $Z$ and the $X_P$'s, hence
$$L_1=\sum _{P\in {\mathcal P}} \mu(X_P).$$

\bigskip
To derive the formulae for $\mu(X_P)$ we consider a general pair of
pants $P$ with boundary $a,b,c$ of lengths $x,y,z$. We consider
perpendicular geodesics $\alpha_p$ from points $p$ on $a$. We let
$p_1,p_2$ be the two points on $a$ such that $\alpha_{p_i}$ is a
simple geodesic spiraling towards $b$ (one for each direction) and
let $q_1,q_2$ be the two points on $a$ such that $\alpha_{q_i}$ is a
simple geodesic spiraling towards $c$. Also assume that
$p_1,q_1,q_2,p_2$ is the cyclic ordering of the points on $a$, which
divide $a$ into the  intervals $[p_1,q_1], [q_1, q_2], [q_2,p_2]$
and $[p_2,p_1]$ with disjoint interiors (see figure \ref{pants}). Each interval contains a
unique point $m$ such that $\alpha_m$ is a {\em simple
orthogeodesic}
 from $a$ to $a, c, a$, and $b$ respectively. We see that
for $p\in (p_2,p_1)$, $\alpha_p$ is a simple geodesic from $a$ to
$b$, similarly, for $p\in (q_1,q_2)$, $\alpha_p$ is a simple
geodesic from $a$ to $c$. For $p \in (p_1,q_1) \cup (q_2,p_2)$,
$\alpha_p$ is either a simple geodesic from $a$ to itself or it has
self-intersections. Furthermore, $L[p_2,p_1]$ is precisely the
orthogonal projection of $b$ to $a$, similarly  $L[q_1,q_2]$ is the
orthogonal projection of $c$ to $a$, and $L[p_1,q_1]=L[q_2,p_2]$ by
symmetry.

\begin{figure}[htbp] 
   \centering
   \includegraphics[width=4in]{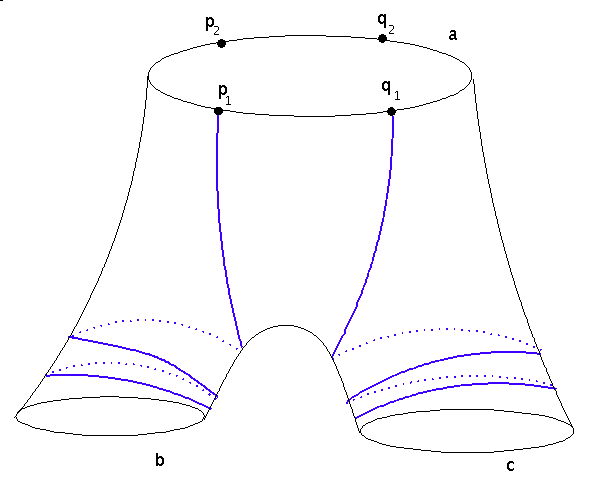} 
   \caption{Pants P with spiraling geodesics}
   \label{pants}
\end{figure}

Now let $P \in {\mathcal P}$ where $a=\beta_1$. We have:
\begin{itemize}
    \item If $b=\gamma_1$
and $c=\gamma_2$ are interior curves of $S$, then $G_v \subset X_P$ if
and only if $\pi(v) \in (p_1,q_1) \cup (q_2,p_2)$. Then
$\mu(X_P)=D(x,y,z)=2L[p_1,q_1]$. By elementary hyperbolic geometry
we have
$$D(x,y,z) = 2\log\left(\frac{e^{\frac{x}{2}}+e^{\frac{y+z}{2}}}{e^{-\frac{x}{2}}+e^{\frac{y+z}{2}}}\right),$$

    \item If say $b=\gamma$
is an interior geodesic and $c=\beta_i$ is a boundary geodesic of
$S$, then $G_v \in X_P$ if and only if $\pi(v) \in (p_1,q_1) \cup
(q_1,q_2) \cup(q_2,p_2)$. We have $\Length[p_2,p_1] =
\log(\coth(Z/2))$ where $Z$ is the length of the perpendicular arc
from $a$ to $b$. Applying the hyperbolic cosine rule we get
$$R(x,y,z) = x- \log\left(\frac{\cosh(\frac{y}{2})+\cosh(\frac{x+z}{2})}{\cosh(\frac{y}{2})+\cosh(\frac{x-z}{2})}\right).$$
\end{itemize}

The generalized McShane identity now follows by substitution.

%

\subsection{Cusps and cone points}
If $\beta_1$ is a cusp, we can take a  horocycle $C$ of length 1
about the cusp and remove the neighbourbood of the cusp bounded by
$C$. We now take $X$ to be the set of vectors in $T_1(S)$ such that
$\pi(v) \in C$ and $v$ is perpendicular to $C$ and pointing into
$S$. Essentially the same analysis works to give a decomposition of
$X$, with suitable modifications of the functions $D$ and $R$, which
now only depend on two variables. In the tightening argument, we
need the fact that the horocycle chosen is sufficiently small so
that it is dijoint from all simple closed geodesics, choosing length
one as we did works. If all other boundaries are cusps, then we
recover the original McShane identity since in this case,
$$D(y,z)=R(y,z)=\frac{1}{1+e^{\frac{y+z}{2}}}.$$ If $\beta_1$ is a cone
point of cone angle $\theta_1$, we decompose the set of tangent
vectors based at the cone point again in essentially the same way. We note that in order for the suface obtained after cutting to be convex, the restriction that the cone angle $\le \pi$ is necessary, we need this to perform the tightening argument. Similarly, we require all other cone angles to be $\le \pi$ if we want every essential simple closed curve to be represented by a geodesic (or the double cover of a geodesic segment between two cone points of angle $\pi$). 

Here it is useful to regard a cone point as an axis through the point perpendicular to the
hyperbolic plane, and  use the complex measure of length between two
skew axes in $\Hyp^3$. With this, the measure of the angle is purely
imaginary. Similarly, other components $\beta_i$ which are cone
angles should be regarded as axes perpendicular to the plane, and we
recover exactly the same identity as that obtained by Mirzakhani,
with the same functions, with the convention that cone points have
purely imaginary lengths, see \cite{TWZ06} for details.

\subsection{Index sets and the relation with Basmajian identity}
The set $X$ in the McShane identity is decomposed into the disjoint
union of $Z$, a set of measure 0 and a countable union of disjoint
open intervals  $X_{\alpha}$, which from the previous observation is
indexed by $\alpha \in SO_S(\beta_1)$, the set of {\it simple} orthogonal
geodesics on $S$ with base point on $\beta_1$. Each such simple orthogeodesic
gives rise to an interval in $\beta_1$, all of which are disjoint. The first sum consists of summands corresponding to the (two) intervals from the  simple orthogeodesics from $\beta_1$ to itself contained in the pants $P$ where $\gamma_1$ and $\gamma_2$ are interior geodesics, a summand of the second sum consists of three intervals, the extra interval coming from the simple orthogeodesic contained in $P$ from $\beta_1$ to $\beta_i$. 

If $\alpha \in SO_S(\beta_1)$ goes from $\beta_1$ to itself, then
$\mu(X_{\alpha})=L[p_1,q_1]=D(x,y,z)/2$ (note that this length
depends on the geometry of the pants $P$ and not just the length of
$\alpha$) and if $\alpha$ is a simple orthogonal geodesic from
$\beta_1$ to another component $\beta_i$, then $\mu(X_{\alpha})=2
\log\left(\coth\frac{l(\alpha)}{2}\right)$, the projection of
$\beta_i$ to $\beta_1$ along $\alpha$.

The index set for the Basmajian identity is much larger, and
strictly contains the index set for the McShane identity. In this
sense, the Basmajian identity for surfaces, as restricted to
$\beta_1$ is a refinement of the McShane identity: the terms
corresponding to simple geodesics from $\beta_1$ to a different
component $\beta_i$ are the same for both identities, however, in the McShane identity, each
set $X_{\alpha}$ where $\alpha$ is a simple geodesic from $\beta_1$
to itself  contains infinitely many terms in
the Basmajian identities, as infinitely many non-simple
orthogeodesics have the non-intersecting beginning part (the
geodesic segment $G_v$ defined earlier) contained in the same pants
$P$.

We note also that the index set for the McShane identity can be regarded as the set of all embedded pairs of pants in $S$ which contain $\beta_1$ as a boundary. These in turn split into two subsets, pairs of pants $P$ for which $\partial P \cap \partial S =\beta_1$ or $\partial P \cap \partial S=\beta_1 \cup \beta_i$ for some $i \neq 1$. The first type gives the first sum, the second type the second sum in Theorem $B'$. This point of view is useful as it generalizes to the Luo-Tan identity.

\section{Proofs of Theorems B and D: Interior Flow}
For the Bridgeman-Kahn and Luo-Tan identities, we consider $M$ a
hyperbolic manifold and $T_1(M)$ its unit tangent bundle. We let $X
= T_1(M)$ with $\mu$  the volume measure on $T_1(X)$. We then consider for
each $v \in T_1(M)$ the geodesic obtained by flowing in both
directions. We will show that the two identities described are
obtained by considering the dynamical properties of this geodesic.

\subsection{Proof of the Bridgeman-Kahn identity}
Let $M$ be a hyperbolic manifold with totally geodesic boundary. We
let $X = T_1(M)$ and $\mu$ be the volume measure on $T_1(M)$. For each
$v \in T_1(M)$ we let $g_v$ be the maximal geodesic arc tangent to
$v$. To derive the Bridgeman-Kahn identity, we let $Z$ be the set of
$v$ such that $g_v$ is not a proper geodesic arc (i.e. the flow does
not hit the boundary in at least one direction). Once again, as the
limit set of $M$ is measure zero, the set $Z$ satisfies $\mu(Z) =
0$. For $v \not\in Z$ we have that $g_v$ is a proper geodesic arc,
and as in the Basmajian identity, we define an equivalence relation
by $v \sim w$ if $g_v, g_w$ are homotopic rel boundary. Once again,
each orthogeodesic $\alpha$ defines an equivalence class $E_\alpha$
and as before, they form a partition. Thus we have the associated
identity
$$\Vol(T_1(M)) = \sum_{\alpha \in O_M} \Vol(E_\alpha)$$
To calculate $\Vol(E_\alpha)$, we  lift $\alpha$ to the universal
cover such that it is perpendicular to two boundary planes $P,Q$. By
definition, any $v \in E_\alpha$ has $g_v$ homotopic rel boundary to
$\alpha$. Thus $g_v$ has a unique lift to a geodesic $\tilde g_v$
which has endpoints on $P,Q$. Hence $E_\alpha$ also has a unique
lift to $\tilde E_\alpha$ where
$$\tilde E_\alpha = \left\{ v\in T_1(\Hyp^n)\ | \ \exists\  a \leq 0, b \geq 0 \mbox{  such that } \tilde g_v(a) \in P, \tilde g_v(b) \in Q\right\}.$$
The volume of this set only depends on $l(\alpha) = d(P,Q)$ and the
dimension. Therefore we have
$$\Vol(E_\alpha) = F_n(l(\alpha))$$
for some function $F_n$ which gives the Bridgeman-Kahn identity. To
derive a formula for $F_n$, we let $\Omega$ be the volume measure on
the unit tangent bundle to the upper half space model for $\Hyp^n$,
invariant under $Isom(\Hyp^n)$. We let $G(\Hyp^n)$ be the space of
oriented geodesics in $\Hyp^n$ and identify $G(\Hyp^n) =
(\Sph^{n-1}_\infty\times\Sph^{n-1}_\infty \setminus\mbox{Diagonal})$
by assigning to $g$ the pair of endpoints $(x,y)$. We have a natural
fiber bundle $p:T_1(\Hyp^n)\rightarrow G(\Hyp^n)$ by letting  $p(v)$
be the oriented geodesic tangent to $v$. We obtain a parametrization
of $T_1(\Hyp^n)$ by choosing a basepoint on each geodesic. We let $b
\in \Hyp^n$ and for each geodesic $g$, let $b_g$ be the nearest
point of $g$ to $b$. Then to each $v \in T_1(\Hyp^n)$ we assign the
triple $(x,y,t) \in \Sph^{n-1}_\infty\times \Sph^{n-1}_\infty \times
\Real$ where $(x,y) = p(v)$  and $t$ is the signed hyperbolic
distance along the geodesic $p(v)$ from $b_{p(v)}$ to  $v$.  In
terms of this parametrization,
$$d\Omega = \frac{2dV_x dV_ydt}{|x-y|^{2n-2}}$$
where $dV_x = dx_1dx_2\ldots,dx_n$ and $|x-y|$ is the Euclidean
distance between $x,y$ (see \cite{Nic89}). We choose planes $P, Q$ such that $d(P,Q) =
l$ to be given by the planes intersecting the boundary in the
circles of radius $1, e^l$ about the origin. We define
$$E_l =  \left\{ v\in T_1(\Hyp^n)\ | \ \exists\  a \leq 0, b \geq 0 \mbox{  such that } \alpha_v(a) \in P, \alpha_v(b) \in Q\right\}.$$
If $g = (x,y)$ is a geodesic intersecting planes both $P,Q,$ we let
$L(x,y,l) = d(P\cap g, Q \cap g)$, the length between intersection
points. Alternately we have $L(x,y,l) = \Length(p^{-1}(g) \cap
E_l)$. Then integrating over $t$ we have
$$F_n(l) = \int_{E_l} d\Omega = \int_{g \in p(E_l)} \left( \int_{p^{-1}(g) \cap E_l} dt\right)\frac{2 dV_x dV_y}{|x-y|^{2n-2}}$$
giving
\begin{equation}
F_n(l) = \int_{|x| < 1} \int_{|y| > e^l}  \frac{2L(x,y,l)dV_x dV_y}{|x-y|^{2n-2}}.
\label{bk_int_form}
\end{equation}
This integral formula can be simplified to obtain a closed form in
even dimensions and can be reduced to an integral over the unit
interval of a closed form in odd dimensions, in particular, when
$n=2$, it takes on the explicit form given in the Bridgeman identity
which we describe in the next subsection.

\subsection{Dilogarithm Identities} We first describe the Rogers
dilogarithm. We define the $k$th polylogarithm function $Li_k$ to be
the analytic function with Taylor series
$$Li_k(z) = \sum_{n=1}^\infty \frac{z^n}{n^k}\qquad \mbox{ for } |z| < 1.$$
Then we have
$$Li_0(z) = \frac{z}{1-z} \qquad Li_1(z) = -\log(1-z).$$
Also they satisfy the recursive formula
$$Li_k'(z) = \frac{Li_{k-1}(z)}{z}.$$
The function $Li_2$ is the dilogarithm function and Rogers
dilogarithm function is a normalization of it given by
$${\mathcal R}(z) = Li_2(z) + \frac{1}{2}\log|z|\log(1-z).$$
In the paper  \cite{B11}, the first author considered the surface case of the Bridgeman-Kahn identity. We let $S$ be a  finite area surface with totally geodesic boundary. Then boundary components of $S$ are either closed geodesics or bi-infinite geodesics (such as the case when $S$ is an ideal polygon). A {\em boundary cusp} of $S$ is a cusp on the boundary of $S$ bounded by two bi-infinite geodesics contained in the boundary of $S$.  Let $N_S$ be the number of boundary cusps. Then we have the following generalized version 
\begin{theorem}{(Bridgeman, \cite{B11})}
Let $S$ be a finite area hyperbolic surface with totally geodesic boundary. Then
$$\Vol(T_1(S)) = 2\pi \mbox{Area}(S) = \sum_{l \in L_S} 4.{\mathcal R}\left(\sech^2\frac{l}{2}\right) + \frac{2\pi^2}{3}N_S.$$
where ${\mathcal R}$ is the Rogers dilogarithm function.
\end{theorem}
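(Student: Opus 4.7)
My plan is to apply the interior flow decomposition used to prove Theorem B, adapted to allow bi-infinite boundary components of $S$ meeting at $N_S$ boundary cusps. Set $X = T_1(S)$ with the Liouville measure $\mu$, so that $\mu(X) = 2\pi\mbox{Area}(S)$ by the $S^1$-bundle structure. For each $v \in X$, let $g_v$ be the maximal geodesic arc through $v$, flowed in both directions until it exits $S$ either through an interior point of the geodesic boundary or by escaping asymptotically into a boundary cusp. Let $Z \subset X$ be the set of $v$ for which $g_v$ fails to terminate in at least one direction; the limit set has measure zero and a Birman--Series style bound controls the arcs approaching boundary cusps, giving $\mu(Z)=0$.

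Declare $v \sim w$ when $g_v$ and $g_w$ are properly homotopic rel boundary in $S$. On $X \setminus Z$ the equivalence classes split into two families: (a) classes $E_\alpha$ indexed by finite orthogeodesics $\alpha \in O_S$, where $g_v$ meets the boundary perpendicularly at both endpoints, and (b) classes $E_c$ indexed by boundary cusps $c$ of $S$, where $g_v$ escapes asymptotically to $c$ on at least one side. For family (a), lifting $\alpha$ to the universal cover perpendicular to two disjoint boundary lines $P,Q$ at distance $l(\alpha)$ reduces the measure to a universal function $F_2(l(\alpha))$ exactly as in the proof of Theorem B, and the explicit dimension-two calculation of the integral (\ref{bk_int_form}) carried out in \cite{B11} yields $F_2(l) = 4\mathcal{R}(\sech^2(l/2))$, producing the orthospectrum sum in the claim.

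The main obstacle is showing that the total measure of family (b) attached to each boundary cusp equals $\frac{2\pi^2}{3}$, independent of the local geometry. I would compute this by lifting $c$ to the ideal point $\infty$ in the upper half plane, with the two adjacent boundary rays lifting to vertical half-lines, and integrating the Liouville form $d\Omega = 2\,dV_x\,dV_y\,dt/|x-y|^2$ over the set of geodesics whose restriction to the fundamental cuspidal wedge escapes to $\infty$, in direct analogy with the derivation of (\ref{bk_int_form}). A clean consistency check is the ideal triangle, which has $\mbox{Area} = \pi$, empty orthospectrum, and three boundary cusps: the identity together with the $\Z/3$ symmetry of the triangle forces each cusp to contribute exactly $\frac{2\pi^2}{3}$. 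The same value arises as the natural limit $F_2(l) \to 4\mathcal{R}(1) = \frac{2\pi^2}{3}$ as $l \to 0$, which corresponds geometrically to pinching a short orthogeodesic until its two endpoint boundary geodesics become asymptotic at infinity; continuity of the Liouville measure along such a degeneration provides an alternative rigorization, and summing the two contributions produces the identity.
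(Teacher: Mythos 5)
Your proposal is correct and follows essentially the same route as the paper: the decomposition of $T_1(S)$ into a measure-zero set $Z$, orthogeodesic classes $E_\alpha$ of measure $F_2(l(\alpha))$, and boundary-cusp classes $E_c$, with the cusp constant $\frac{2\pi^2}{3}$ pinned down exactly as in the paper by noting the sets $E_c$ are isometric (universal, independent of the ambient surface) and evaluating on the ideal triangle, where $\Vol(T_1(T)) = 2\pi^2 = 3\Vol(E_c)$. Your appeal to the explicit computation of $F_2(l) = 4\mathcal{R}\left(\sech^2\frac{l}{2}\right)$ from the integral formula (\ref{bk_int_form}) is the original derivation in \cite{B11}, which the paper cites before presenting its alternate characterization of $\mathcal{R}$ via the Euler and Abel identities, so no genuinely different argument is involved.
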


In order to prove this, we once again take $X = T_1(S)$ and $\mu$ volume measure. We again define $Z$ to be the set of $v$ such $g_v$ is not a proper geodesic arc. Then if $v \not\in Z$, $g_v$ is a proper geodesic arc and we define $v \sim w$ if $g_v, g_w$ are homotopic rel boundary. There are two cases, either $g_v$ is homotopic rel boundary to an orthogeodesic $\alpha$ or $g_v$ is homotopic to a neighborhood of a boundary cusp $c$. Thus we have equivalence classes $E_\alpha$ for each orthogeodesic and $E_c$ for each boundary cusp. Therefore
$$\Vol(T_1(S)) = \sum_{\alpha} \Vol(E_\alpha) + \sum_{c_i} \Vol(E_{c_i}).$$
We have by definition that $\Vol(E_\alpha) = F_2(l(\alpha))$. Also, the sets $E_{c_i}$ are all isometric. Therefore
$$\Vol(T_1(S)) = \sum_{l \in L_S} F_2(l) + N_S\Vol(E_{c}).$$
As the above identity holds for $S$ an ideal triangle $T$ we have
$$\Vol (T_1(T)) = 2\pi^2 = 3 \Vol(E_c).$$
Therefore
$$\Vol(T_1(S)) = \sum_{l \in L_S} F_2(l) + \frac{2\pi^2}{3}N_S.$$
The proof of the identity then follows by showing that $F_2(l) = 4.{\mathcal R}(\sech^2(\frac{l}{2}))$ where $\mathcal R$ is the Rogers dilogarithm (in \cite{B11} the index was over all unoriented orthogeodesics so the constant there was $8$). In \cite{B11} this is done by directly computing the integral in the formula \ref{bk_int_form} for $F_2$. We now describe an alternate approach that avoids this computation.

\subsection{Finite identities}
The surface identity is only a finite identity when $S$ is an ideal n-gon. In this case we have a finite orthospectrum $l_1,\ldots, l_k$. We then have
$$2\pi^2(n-2) = \sum_{i} F_2(l_i) + \frac{2\pi^2n}{3}.$$
We rewrite this as
$$\sum_{i} F_2(l_i) = \frac{4\pi^2(n-3)}{3}.$$
We let $R$ be the function defined such that $F_2(l) = 8.R(\sech^2(\frac{l}{2}))$.  Then $R$ satisfies
$$ \sum_{i} R\left(\sech^2\left(\frac{l_i}{2}\right)\right) = \frac{(n-3)\pi^2}{6}.$$
If the ideal polygon $S$ has cyclically ordered vertices $x_i$, then the $i$th side can be identified with the geodesic with endpoints $x_i, x_{i+1}$. Then an orthogeodesic is the perpendicular between geodesic $x_i, x_{i+1}$ and geodesic $x_j, x_{j+1}$ where $|i-j| \geq 2$ which we label $\alpha_{ij}$ of length $l_{ij}$. We define the cross ratio of four points by
$$[a,b;c,d] = \frac{(a-b)(d-c)}{(a-c)(d-b)}.$$
Then a simple calculation shows that
$$ [x_i, x_{i+1}; x_j, x_{j+1}] = [1,-1,-e^{l_{ij}}, e^{l_{ij}}] =  \frac{(1+1)(e^{l_{ij}}+e^{l_{ij}})}{(1+e^{l_{ij}})(1+e^{l_{ij}})} = \sech^2(l_{ij}/2)$$
 Therefore we can rewrite the identity in the finite case as
$$ \sum_{|i-j| \geq 2} R\left([x_i,x_{i+1};x_j,x_{j+1}]\right) = \frac{(n-3)\pi^2}{6}.$$

{\bf Ideal Quadrilaterals and Euler's identity:} For $n=4$, $S$ is an ideal quadrilateral with two ortholengths $l_1, l_2$ and vertices $x_1,x_2,x_3,x_4$. Thus as $[x_1, x_2;x_3,x_4] = 1 -[x_3,x_4;x_1,x_2]$, for any  $0 < x < 1$ we have
$$R(x) + R(1-x) =\frac{\pi^2}{6}.$$
This identity was proved for the dilogarithm function by Euler (see Lewin's book \cite{Lew91} for details).
Also by symmetry $R(1/2) = \pi^2/12$ and similarly $R(0) = 0 , R(1) = \pi^2/6$.

{\bf Ideal Pentagons and Abel's identity:}  If $S$ is a ideal pentagon then there are $5$ orthogeodesics. We send three of the vertices to $0, 1, \infty$ and  the other two to $u,v$ with $0 < u < v < 1$. Then the cross ratios in terms of $u,v$ are
$$u, \qquad 1-v, \qquad \frac{v-u}{v} , \qquad \frac{v-u}{1-u}, \qquad \frac{u(1-v)}{v(1-u)} $$
Putting into the equation we obtain the following equation.
\begin{eqnarray*}
R\left(u\right)+R\left(1-v\right)+R\left(\frac{v-u}{v}\right)+R\left(\frac{v-u}{1-u}\right)+ R\left(\frac{u(1-v)}{v(1-u)}\right)  = \frac{\pi^{2}}{3}.
 \end{eqnarray*}
Letting $x = u/v, y = v$,  we get
\begin{eqnarray*}
R\left(xy\right)+R\left(1-y\right)+R\left(1-x\right)+R\left(\frac{y(1-x)}{1-xy}\right)+ R\left(\frac{x(1-y)}{1-xy}\right)  =  \frac{\pi^{2}}{3}.
 \end{eqnarray*}
Now by applying Euler's identity for $x,y$, we obtain Abel's pentagon identity
\begin{eqnarray*}
R\left(x\right)+R\left(y\right) =R\left(xy\right)+R\left(\frac{y(1-x)}{1-xy}\right)+R\left(\frac{x(1-y)}{1-xy}\right) .
 \end{eqnarray*}

To show $R = {\mathcal R}$ we use the observation of Calegari  in
\cite{Cal10a}, that by a result of Dupont (see \cite{Du87}), the Rogers dilogarithm is
characterized by satisfying the Euler and Abel identities, and therefore  $R = {\mathcal R}$.

\subsection{Proof of the Luo-Tan identity}
We let $S$ be a closed hyperbolic surface. We note that as $\partial
S=\emptyset$, the Basmajian and Bridgeman identities do not make sense as there are no orthogeodesics. Similarly, it is not clear how to extend  the McShane identity to this case as there is no starting point, i.e.,
no boundary component or horocycle to decompose. Furthermore, since the generalization of the McShane identity to cone surfaces has a restriction that all cone angles are $\le \pi$, we cannot deform a cone singularity to a smooth point to obtain an identity. 

However, one can combine two key ideas from the Bridgeman-Kahn and the McShane proofs to obtain an identity, which is what Luo and the second author did. The key idea
from  the Bridgeman-Kahn identity  is to start from any point and
any direction, that is, to define $X = T_1(S)$ with $\mu$ the volume
measure. The key idea we use from the proof of the McShane identity is that instead of flowing in both directions indefinitely, as in the Bridgeman identity, we flow
until we get intersection points. In this way, for a generic vector $v \in T_1(S)$, we construct a geodesic graph $G_v$ with Euler characteristic $-1$  and use this to obtain a decomposition of $X$. From this decomposition we calculate the measures of the components to obtain the Luo-Tan identity.

More precisely, given $v \in T_1(S)$, consider
the unit speed geodesic rays $\gamma_v^+(t)$ and $\gamma_v^-(t)$ $(
t \geq 0)$ determined by exponentiating $\pm v$. If the vector $v$
is generic, then both rays will self intersect transversely by the
ergodicity of the geodesic flow, otherwise, we have $v \in Z \subset X$
where $\mu(Z)=0$.

\begin{figure}[htbp] 
   \centering
   \includegraphics[width=4in]{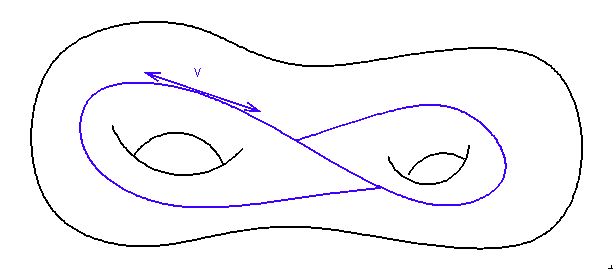} 
   \caption{Graph $G_v$}
   \label{lt1}
\end{figure}

Each $v\in X \setminus Z$ will determine a
canonical graph $G_v$ as follows (see figure \ref{lt1}). Consider the path
$A_t=\gamma_v^-([0,t]) \cup \gamma_v^+([0,t])$ for $t
>0$ obtained by letting the geodesic rays $g_v^{-}$ and
$g_v^+$ grow at equal speed from time $0$ to $t$. Let $t_+>0$ be the
smallest positive number so that $A_{t_+}$ is not a simple arc,
without loss of generality, we may assume that $\gamma_v^+(t_+) \neq
\gamma_v^-(t_+)$ by ignoring a set of measure zero (i.e. putting it
into $Z$). Say $\gamma_v^+(t_+) \in \gamma_v^-([0,t_+]) \cup
\gamma_v^+([0, t_+))$. Next, let $t_-
> t_+$ be the next smallest time so that $\gamma_v^-(t_-) \in \gamma_v^-([0,
t_-)) \cup \gamma_v^+[0, t_+])$. 

\medskip

\begin{definition} The union $\gamma_v^-([0, t_-])
\cup \gamma_v^+([0, t_+])$ is the graph $G_v$ associated
to $v$. 
\end{definition}
From the definition, $G_v$ has  Euler characteristic is $-1$. 
 We call an embedded pair of pants (three hole sphere) or one-hole torus with geodesic boundary in $S$ a {\it simple geometric embedded subsurface}. The following result allows us to decompose $X\setminus Z$ into subsets indexed by the set of simple geometric embedded subsurfaces.

\begin{figure}[htbp] 
   \centering
   \includegraphics[width=4in]{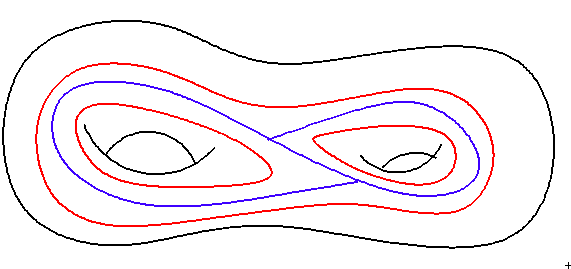} 
   \caption{Subsurface $\Sigma_v$, with $G_v \subset \Sigma_v$}
   \label{lt2}
\end{figure}
\begin{proposition}\label{prop:loopisembedded}(Proposition 3.1 of \cite{LT11})
The graph $G_v$ is contained in a unique simple geometric
embedded subsurface $\Sigma_v\subset S$.

\end{proposition}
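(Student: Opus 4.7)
The plan is to construct $\Sigma_v$ by taking a regular neighborhood $N(G_v)$ of the graph, verifying that each of its boundary components is essential in $S$, and then tightening those boundary components to their closed geodesic representatives. Uniqueness will follow from an Euler characteristic count applied to any competing subsurface. First I verify the topology of $G_v$. The two self-intersection points $P := \gamma_v^-(t_-)$ and $Q := \gamma_v^+(t_+)$ are transverse by genericity, while the basepoint $\pi(v)$ is a smooth interior point of the single geodesic arc $\gamma_v^-|_{[0,t_-]} \cup \gamma_v^+|_{[0,t_+]}$. At each of $P, Q$, one incident ray terminates there while the other arc passes transversely through, so the vertex carries three half-edges and is trivalent. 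Hence $V = 2$, $E = 3$, and $\chi(G_v) = -1$. A case analysis on the four possible configurations of $(t_+, t_-)$ shows that $G_v$ is either a theta graph or a dumbbell, and its regular neighborhood $N(G_v) \subset S$ is therefore an orientable compact surface with $\chi = -1$, i.e., either a three-holed sphere or a one-holed torus.

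The main obstacle is showing that every boundary component of $N(G_v)$ is essential in $S$. Suppose for contradiction that some boundary circle $c$ bounds an embedded disk $D \subset S$. Then $c$ is freely homotopic to a closed edge-walk $w$ on $G_v$, which is itself null-homotopic in $S$ and whose edges are geodesic subsegments of $\gamma_v^\pm$. Applying Gauss--Bonnet to a disk with piecewise geodesic boundary in the free homotopy class of $w$ gives
$$\sum_i(\pi - \theta_i) \;=\; 2\pi + \mbox{Area}(D) \;>\; 2\pi,$$
where the $\theta_i \in (0,\pi)$ are the interior angles of $w$ at its corners, strictly less than $\pi$ by transversality of the intersections at $P, Q$. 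This forces $w$ to have at least three corners. A direct inspection of the closed walks with three or more corners on a theta graph or dumbbell, combined with the minimality of $t_+$ (so $\gamma_v^\pm|_{[0, t_+)}$ are embedded and have no mutual self-intersection before time $t_+$) and of $t_-$ (so $\gamma_v^-|_{[t_+, t_-)}$ introduces no further intersections), then rules out any such $w$: the existence of a null-homotopic walk with enough corners would force one of the rays to close up earlier than $t_+$ or $t_-$, contradicting the definition of $t_\pm$.

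With every boundary component of $N(G_v)$ essential, each has a unique closed geodesic representative in $S$. A standard convex-core and cut-and-paste argument, analogous to the tightening step used in the proof of the McShane identity earlier in this paper, shows that these representatives are pairwise disjoint and bound an embedded subsurface $\Sigma_v \subset S$ isotopic to $N(G_v)$, with the isotopy carrying $G_v$ into $\Sigma_v$. Since $\Sigma_v$ has geodesic boundary and $\chi(\Sigma_v) = -1$, it is a simple geometric embedded subsurface containing $G_v$. For uniqueness, let $\Sigma'$ be any simple geometric embedded subsurface with $G_v \subset \Sigma'$. After an isotopy we may assume $N(G_v) \subset \Sigma'$, and additivity of Euler characteristic gives $\chi(\overline{\Sigma' \setminus N(G_v)}) = \chi(\Sigma') - \chi(N(G_v)) = 0$. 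A disk component of this complement would exhibit a boundary circle of $N(G_v)$ as null-homotopic in $S$, contradicting the essentiality established above; hence every component is an annulus, yielding a homotopy between each boundary circle of $\Sigma'$ and one of $N(G_v)$. Since both $\Sigma'$ and $\Sigma_v$ have geodesic boundary and each essential simple closed curve has a unique closed geodesic representative, $\partial \Sigma' = \partial \Sigma_v$ as sets of geodesics in $S$, whence $\Sigma' = \Sigma_v$.
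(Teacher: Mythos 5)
Your skeleton (regular neighborhood, essentiality of its boundary, tightening, Euler-characteristic uniqueness) is viable and is organized differently from the paper, which instead cuts $S$ open along $G_v$, passes to the metric completion $\hat S$ with locally convex boundary, and builds $\Sigma_v$ from the convex annuli between each boundary walk $\hat\gamma$ and the boundary geodesics $\gamma$ of $core(\hat S)$; your uniqueness count via annuli is arguably cleaner than the paper's bigon argument. But there are two genuine gaps. First, the essentiality step is asserted rather than proved at exactly the hard cases. Your three-corner bound from Gauss--Bonnet does dispose of the theta-graph-with-three-boundary-circles case (each walk has at most two corners), but the outer walk of a dumbbell has four corner passages and the single boundary walk in the one-holed-torus case has up to six, and for these your proposed mechanism --- that a null-homotopic walk ``would force one of the rays to close up earlier than $t_+$ or $t_-$'' --- does not work: minimality of $t_\pm$ only pins down the combinatorics of $G_v$ (two trivalent vertices, three edges); whether a boundary walk is null-homotopic is a property of the holonomy of the embedding, not of the parametrization. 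The uniform argument you are missing is an angle budget: at each trivalent vertex the three sector angles are $\alpha$, $\pi-\alpha$, $\pi$, so the total exterior turning available per vertex is exactly $\pi$, hence $2\pi$ for all boundary walks of $N(G_v)$ combined; since every corner turns positively and each sector is rounded by exactly one walk exactly once, a single walk has total turning at most $2\pi$, while a null-homotopic piecewise-geodesic loop must have total turning $2\pi+\mathrm{Area}>2\pi$. (This also repairs the minor point that your walk $w$ need not be embedded: apply Gauss--Bonnet to the embedded boundary circle of a small neighborhood and shrink, or lift to $\Hyp^2$.)

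Second, the tightening step both outsources the proposition's main content and over-claims. The sentence ``a standard convex-core and cut-and-paste argument \dots shows that these representatives are pairwise disjoint and bound an embedded subsurface $\Sigma_v$ isotopic to $N(G_v)$'' is precisely what the paper's proof establishes via the convex core of the cut surface (disjointness of $\hat\gamma$ from $\gamma$ by convexity, $G_v$ disjoint from $core(\hat S)$), so citing it as standard assumes the statement being proved. Moreover ``isotopic to $N(G_v)$'' can fail: nothing you establish excludes a complementary region of $G_v$ being an annulus, i.e.\ two boundary circles of $N(G_v)$ being freely homotopic in $S$; then their geodesic representatives coincide, the tightened boundary geodesics are not pairwise disjoint, and the correct $\Sigma_v$ is a one-holed torus even though $N(G_v)$ is a three-holed sphere. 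The paper's construction absorbs the complementary annulus automatically (both walls are homotopic to the single core geodesic of that component of $\hat S$) and still produces a simple geometric subsurface of Euler characteristic $-1$, whereas your construction, as stated, breaks. Your uniqueness paragraph would in fact survive this degeneracy --- an annulus component of $\overline{\Sigma'\setminus N(G_v)}$ joining two circles of $\partial N(G_v)$ is consistent with your count --- but only after $\Sigma_v$ has been built correctly, so the proposal as written is incomplete at these two points.
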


\begin{proof}

Cutting the surface $S$ open along $G_v$, we obtain a  surface
whose metric completion $\hat S$ is a compact hyperbolic surface
with convex boundary. The boundary of $\hat S$ consists of 
simple piecewise geodesic loops (corresponding to $G_v$), each boundary has at least one corner. If $\hat \gamma$ is
a  simple piecewise geodesic loop in $\partial \hat S$, it is freely
homotopic to a simple closed geodesic $\gamma$ in $\hat S$ which is
a component of the boundary of   $core( \hat S)$, the convex core of
$ \hat S$. Furthermore $\hat \gamma$ and $\gamma$ are disjoint by
convexity. Therefore, $\hat \gamma$ and $\gamma$ bound a convex
annulus exterior to $core(\hat S)$ and $G_v$ is disjoint from
$core(\hat S)$. The simple geometric subsurface $\Sigma_v$
containing $G_v$ is the union of these convex annuli bounded by
$\hat \gamma$ and $\gamma$ (see figure \ref{lt2}). The Euler characteristic of $\Sigma_v$
is $-1$ by the construction. Furthermore, the surface $\Sigma_v$ is
 unique. Indeed, if
 $\Sigma' \neq \Sigma \subset S$ is a simple geometric subsurface
 so that $G_v\subset \Sigma'$, then $\Sigma'$ has a
 boundary component say $\beta$ which
 intersects one of the boundaries $ \gamma$ of $\Sigma$ transversely (we use here in an essential way the fact that the surfaces $\Sigma$ and $\Sigma'$ are simple).
 Therefore, $\beta$ must intersect the other boundary
 $\hat \gamma$ of the convex annulus described earlier, otherwise we have a hyperbolic bigon, a contradiction. Hence
 it intersects $G_v$ which contradicts $G_v\subset \Sigma'$.
  \end{proof}

 The above discussion gives a
decomposition of  $T_1(S)\setminus Z$ indexed by these simple subsurfaces, namely, for any simple subsurface $\Sigma \subset S$, define $$X_{\Sigma}:=\{v \in X ~:~ G_v \subset \Sigma \}.$$
Then $\mu(X)=\mu(T_1(S))=\sum_P \mu(X_P)+\sum_T\mu(X_T)$ where the first sum is over all simple geometric embedded pairs of pants and the second sum over all simple geometric embedded one-hole tori. It remains to calculate for a simple hyperbolic surface $\Sigma=P$ or $T$ the volume of the set of all unit tangent vectors $v\in T_1(\Sigma)$
so that $G_v$ is strictly contained in $\Sigma$, that is, $G_v$ is a spine for $\Sigma$, this will give us the functions $f$ and $g$ in Theorem $D$.

\subsection{Computing the measures of $X_P$ and $X_T$}
By definition, we have $f(P)=\mu(X_P)$ and $g(T)=\mu(X_T)$. It is complicated to compute the measures directly, it turns out that it is easier to compute $\mu(X_P)$ and $\mu(X_T)$   by calculating the measure of the complementary set in $T_1(\Sigma)$  instead. The idea is that the  vectors $v \in T_1(P)$ in the complementary set can be divided into a small number of disjoint types which can be described quite easily geometrically, hence the complementary set decomposes  into a finite number of disjoint subsets whose measures can be computed in a similar way to the computation for the Bridgeman identity. 

\medskip

Suppose $P$ is a hyperbolic 3-holed sphere, and $i \in\{1,2,3\}$ taken mod $ 3$. Denote the boundary geodesics of $P$ by $L_i$, the simple orthogeodesics from $L_{i+1}$ to $L_{i+2}$ by $M_i$ and the simple orthogeodesic from $L_i$ to itself by $N_i$, see figure \ref{fig6}(a).
 Denote the lengths of the boundary geodesics and the orthogeodesics by the corresponding lower case letters, that is $l_i$, $m_i$ and $n_i$ respectively. 

\begin{figure}[htbp] 
   \centering
   \includegraphics[width=2.25in]{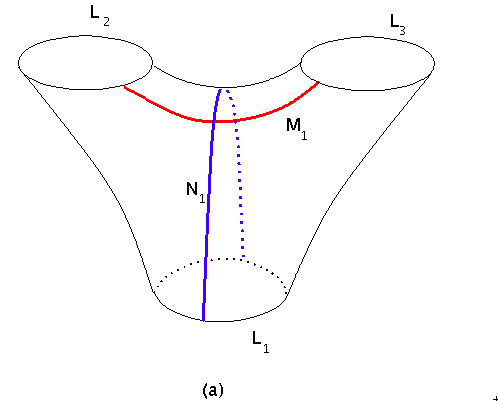}   \includegraphics[width=1.5in]{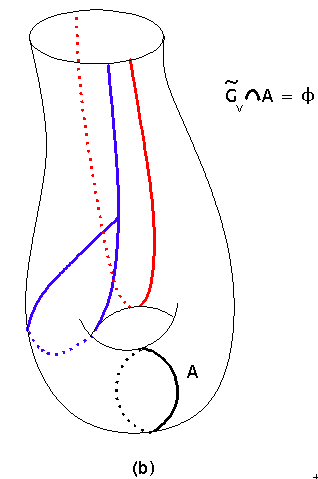} 
   \caption{3-holed spheres and 1-holed tori}
   \label{fig6}
\end{figure}

For $v \in T_1(P)$ we define $\gamma_v$  to be the maximal geodesic arc in $P$ tangent to $v$. We modify the definition of $G_v$ appropriately, to take into account the fact that $\partial P \neq \emptyset$,  that is, $$\hat{G}_v=\gamma_v^-([0, t_-])
\cup \gamma_v^+([0, t_+])$$ 
as before but the times $t_+$ and $t_-$ may also occur when the geodesic hits $\partial P$ in which case we stop generating the geodesic in that direction.

From the definitions, it is clear that $\hat{G}_v \subset \gamma_v$, and $v \in X_P$ if and only if $\hat{G}_v \cap \partial P=\emptyset$. In particular, if $v \in T_1(P) \setminus X_P$, and $v \not\in Z$, then $\hat{G_v}$ is either simple, with both endpoints on $\partial P$, or has one endpoint on $L_i$ and the other end is a loop freely homotopic to $L_j$, $j \neq i$, we call this a {\em lasso} around $L_j$ based at $L_i$ (note that the loop cannot be homotopic to $L_i$). The following gives a decomposition of  $T_1(P) \setminus X_P$ into finitely many types:

\begin{itemize}
\item Define $H(M_i)=\{v\in T_1(P) ~|~ \gamma_v \sim M_1~ {\hbox{rel. boundary}}\}$.  If $v \in H(M_i)$, then $\gamma_v$ is simple,  $\hat{G}_v=\gamma_v$, and $v \not\in X_P$. The measure of these sets,  computed by Bridgeman in \cite{B11}, depend only on $m_i$ and is given by $\mu(H(M_1))=8.{\mathcal R}\left(\sech^2\frac{m_i}{2}\right)$.

\item Define $H(N_i)=\{v\in T_1(P) ~|~ \gamma_v \sim N_1~ {\hbox{rel. boundary}}\}$.  If $v \in H(N_i)$,  then $\gamma_v$ intersects $M_i$   exactly once, the point of intersection divides $\gamma_v$ into two components $\gamma_v^+$ and $\gamma_v^-$. This case is  more complicated as $\gamma_v$ may have arbitrarily many self-intersections, however, $\gamma_v^+$ and $\gamma_v^-$ are both simple. This can be seen by cutting $P$ along $M_i$ to obtain a convex cylinder bounded on one side by $L_i$ and the other by a piecewise geodesic boundary. Then both $\gamma_v^+$ and $\gamma_v^-$ are geodesic arcs from one boundary of the cylinder to the other, so must be simple. In particular, in the construction of $\hat{G}_v$, we see that in this case, $\hat{G}_v$ is either a simple geodesic arc from $L_i$ to itself, or it must be a lasso based at $L_i$. That is, for all such $v$, $\hat{G}_v$ intersects $L_i$ so $v \not\in X_P$. Again, by Bridgeman the measure is $\mu(H(N_i))=8.{\mathcal R}\left(\sech^2\frac{n_i}{2}\right)$.
Note that these sets are disjoint from those in the first case.

\item The remaining case is when $\hat{G}_v$ is a lasso, but does not come from case two above, we call these {\it true} lassos. For $\{i,j,k\}=\{1,2,3\}$ distinct, let $$W(L_i,M_j)=\{v\in T_1(P) ~|~ \hat{G}_v ~\hbox{is a true lasso around $L_i$ based at $L_k$ }\}.$$ Consider say that $\hat{G}_v$ is a lasso based at the point $q$ on  $L_1$ with (positive) loop around $L_2$, but such that $\gamma_v$ is not homotopic relative to the boundary to $N_1$.  It is convenient to understand the set of $v$ generating such $\hat{G}_v$  in the universal cover, $\tilde P \subset \Hyp^2$ (we use the upper half space model). We work with the following setting:\\
Consider a fundamental domain for $P$ in $\Hyp^2$ consisting of two adjacent right angled hexagons $H$ and $H'$ such that $H$ is bounded by $\tilde L_1$, $\tilde L_2$ and $\tilde L_3$ and $H'$ is bounded by $\tilde L_2$, $ \tilde L_1'$ and $\tilde L_3$, see figure \ref{fig7}.  Given $x, y \in \Hyp^2\cup \partial \Hyp^2$, $x \neq y$, let $G[x,y]$ denote the  geodesic in $\Hyp^2$ from $x$ to $y$. 
Normalize so that  $\tilde L_1=G[c,d]$, $\tilde L_2=G[\infty, 0]$, and $\tilde L_3=G[e,f]$, where $e,f, c, d \in \mathbb R$ satisfy $0<e<f<c<d$. Further normalize  so that $ \tilde L_1'=G[1,a]$  where $1<a<e$.   By elementary calculations, we have  $c=e^{l_2},~d=e^{l_2} \coth^2(m_3/2)$.

\begin{figure}[htbp] 
   \centering
   \includegraphics[width=4in]{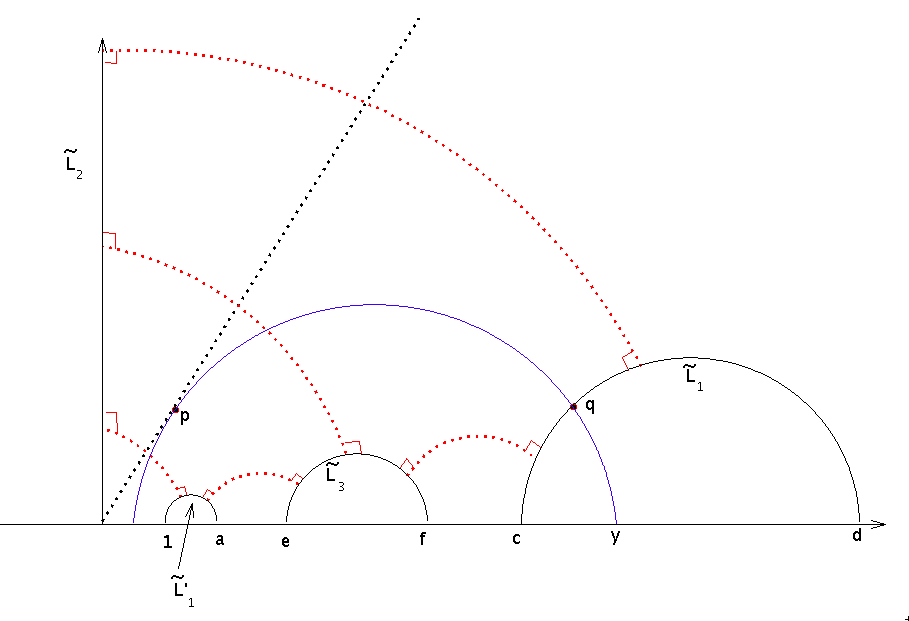} 
   \caption{Universal Cover of $P$ in $\Hyp^2$}
   \label{fig7}
\end{figure}
We can choose a lift of $\hat{G}_v$ so that  the end point $q$ lies on $\tilde L_1=G[c,d]$. Let $G[y,x]$, $y, x \in \mathbb{R}^2$ be the complete geodesic in $\Hyp^2$ containing this lift, in particular, by construction $y \in (c,d)$.  We claim that there is a unique lift so that $0<x<1$. The cases $x=0$ are $x=1$ are important limiting cases. When $x=0$, $\hat{G}_v$ is a simple geodesic of infinite length which spirals around $L_2$ in the positive direction, when $x<0$, $\hat{G}_v$ is homotopic to $M_3$, so is a simple finite geodesic arc from $L_1$ to $L_2$. When $x$ is small and positive, $\hat{G}_v$ is a lasso with positive loop around $L_2$. When $x=1$, $\gamma_v$ spirals around $L_1$ with infinite number of self intersections, and $\hat{G}_v$ is still a lasso from $L_1$ with positive loop around $L_2$. As $x$ ranges between $1$ and $a$, $\gamma_v$ is homotopic to $N_1$ and $\hat{G}_v$ ranges from being a lasso with positive loop around $L_2$ to being simple and then a lasso with negative loop around $L_3$. In particular,  if $\hat{G}_v$ is a lasso with end point at $L_1$ and positive loop around $L_2$, but $\gamma_v$ is not homotopic to $N_1$ we have a unique lift so $0<x<1$ and $c<y<d$. However, not all tangent vectors on $T_1(G[y,x])$ generate $\hat{G}_v$. By the construction of $\hat{G}_v$, the midpoint of the loop, $p$ of $\hat{G}_v$ is a critical point, all vectors which project to the longer part of $\hat{G}_v$ will generate $\hat{G}_v$, those which project to the shorter part (consisting of half of the loop) will generate a different $\hat{G}_v$, by construction. Also,  the loop of $\hat{G}_v$ bounds with $L_2$ a convex cylinder in $P$ so $p$ is the point on the loop which is closest to $L_2$. Hence, $p$ is the point on $G[y,x]$ such that the ray $0p$ is tangent to $G[y,x]$. To 
calculate the volume, we need to integrate the length of the geodesic from $q$ to $p$ over the set of all geodesics in $\Hyp^2$ with endpoints $x,y$ where $0<x<1$ and $c<y<d$, with respect to the Liouville measure on the space of geodesics. Since both forward and backward vectors on this set generate $\hat{G}_v$, we multiply the result by 2, that is we want twice the volume of the set $\Omega$ where
\begin{equation}\label{eqn:123} \Omega = \{ v \in T_1(\Hyp^2)~ |~ \\ v \in T_1(G[q,p]), \\
0<x<1,\\ c<y<d\}.\end{equation}
The computation of $Vol(\Omega)$ is elementary but somewhat messy, the final integral which needs to be computed is given by the following:

\begin{proposition}(Prop 4.1 of \cite{LT11}) 
The volume of $\Omega$ is given by

\begin{equation}\label{eqn:VolofOmega}
 \int_0^1 \left(\int_c^d  \frac{ \ln \left| \frac{
y(x-c)(x-d)}{x(y-c)(y-d)}\right|}{(y-x)^2} dy\right) dx.
\end{equation}
\end{proposition}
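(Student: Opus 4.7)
My plan is to evaluate $\Vol(\Omega)$ by fibered integration using the Liouville parametrization of $T_1(\Hyp^2)$ recalled in the derivation of $F_n$ in \eqref{bk_int_form}. In the upper half space model the measure factors as $d\Omega=2\,dx\,dy\,dt/(y-x)^2$, where $(x,y)\in \Real\times \Real$ are the ideal endpoints of the geodesic tangent to $v$ and $t$ is signed arc length along it. The conditions defining $\Omega$ in \eqref{eqn:123} fix that geodesic to be $G[y,x]$ and constrain the base point of $v$ to the arc $[q,p]\subset G[y,x]$; integrating in $t$ yields the hyperbolic length $d(q,p)$, so
$$\Vol(\Omega)=\int_{0}^{1}\!\int_{c}^{d}\frac{2\,d(q,p)}{(y-x)^{2}}\,dy\,dx,$$
and the proposition reduces to the identity $2\,d(q,p)=\ln\bigl|y(x-c)(x-d)/(x(y-c)(y-d))\bigr|$.

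For the distance formula I would use the boundary cross-ratio expression for hyperbolic length along a geodesic with ideal endpoints $x,y$. A point $z=u+iv$ on the corresponding Euclidean semicircle satisfies the elementary identities $|z-x|^2=(u-x)(y-x)$ and $|z-y|^2=(y-u)(y-x)$, which simplify the cross-ratio formula to
$$d(z_1,z_2)=\tfrac{1}{2}\left|\ln\frac{(u_1-x)(y-u_2)}{(y-u_1)(u_2-x)}\right|,$$
so everything reduces to computing the Euclidean real parts $u_p$ and $u_q$. To find $u_p$, note that the common perpendicular to $G[y,x]$ and $\tilde L_2=G[\infty,0]$ is a Euclidean semicircle centered at $0$, orthogonal to $G[y,x]$ exactly when its radius is $\sqrt{xy}$ (by the Pythagorean relation between centers and radii), and this intersects $G[y,x]$ at the point with real part $u_p=2xy/(x+y)$. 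To find $u_q$, subtract the two semicircle equations for $G[y,x]$ and $G[c,d]$; the imaginary part cancels and yields $u_q=(cd-xy)/(c+d-x-y)$. Substituting gives the clean factorizations $u_q-x=(c-x)(d-x)/(c+d-x-y)$, $y-u_q=(d-y)(y-c)/(c+d-x-y)$, $u_p-x=x(y-x)/(x+y)$, $y-u_p=y(y-x)/(x+y)$; the $(c+d-x-y)$ and $(x+y)$ denominators cancel pairwise in the cross-ratio, leaving $y(c-x)(d-x)/(x(y-c)(d-y))=\bigl|y(x-c)(x-d)/(x(y-c)(y-d))\bigr|$ up to the sign pattern forced by $0<x<1<c<y<d$, which is exactly the target identity.

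The main obstacle is the sign bookkeeping around the absolute values. The cross-ratio formula only determines $2\,d(q,p)$ up to an absolute value, whereas the proposition removes the outer $|\cdot|$ on $\ln$, so one must verify that the quantity inside actually exceeds $1$ on the relevant parameter region. This is equivalent to the elementary inequality $(x+y)(cd+xy)\geq 2xy(c+d)$, which is precisely the condition $u_q\geq u_p$ forcing $q$ and $p$ to be positioned in the order dictated by the orientation of the lift of the lasso $\hat{G}_v$ (positive loop around $L_2$, not homotopic to $N_1$) and by the normalization $0<e<f<c<d$ coming from the embedded pair of pants. Once this sign check is settled, the closed-form volume follows by substitution into the fibered integral.
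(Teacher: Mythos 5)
Your fibered-integration setup and the coordinate computations are correct, and they follow exactly the route the survey itself sets up before deferring the ``elementary but somewhat messy'' evaluation to \cite{LT11}: the factor $2$ in $d\Omega$ combines with the $\tfrac12$ in the cross-ratio distance formula, your values $u_p=2xy/(x+y)$ and $u_q=(cd-xy)/(c+d-x-y)$ are right, the on-circle identities $|z-x|^2=(u-x)(y-x)$ and $|z-y|^2=(y-u)(y-x)$ are right, and your factorizations correctly yield $2\,d(q,p)=\bigl|\ln R\bigr|$ with $R=y(c-x)(d-x)/\bigl(x(y-c)(d-y)\bigr)$, which is the integrand of the Proposition provided $R\ge 1$ on all of $(0,1)\times(c,d)$.

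That last step, which you rightly isolate as the main obstacle, is where your proposal has a genuine gap: the inequality $(x+y)(cd+xy)\ge 2xy(c+d)$ is \emph{not} forced by the sign pattern $0<x<1<c<y<d$, nor by the ordering $0<e<f<c<d$ that you cite --- for general such parameters it is false. Since $\ln R$ is decreasing in $x$ and $\partial_y\ln R=0$ exactly at $y=\sqrt{cd}$, the infimum of $R$ over the rectangle is $(c-1)(d-1)/(\sqrt{d}-\sqrt{c})^2$; for $c=1.01$, $d=1.5$ this is about $0.10<1$, so on part of your integration region the formula would compute $-2d(q,p)$ rather than $2d(q,p)$ (equivalently, $p$ would precede $q$ on the lift). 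What rescues the statement is a relation you never invoke: $\tilde L_1$ is the image of $\tilde L_1'=G[1,a]$ under the deck translation $z\mapsto cz$ along $\tilde L_2$, so $d=ac$ with $a=\coth^2(m_3/2)$, and the hexagon ordering $1<a<e<f<c$ forces $d<c^2$. Under $c<d\le c^2$ the needed bound $\inf R\ge 1$, i.e.\ $(\sqrt{cd}+1)^2\ge 2(c+d)$, does hold: setting $s=\sqrt{c}$, $t=\sqrt{d}$, the function $f(t)=(st+1)^2-2(s^2+t^2)$ is, as a quadratic in $t$, either concave (hence minimized at the endpoints of $[s,s^2]$) or increasing on $t>0$, and it is nonnegative at both endpoints since $f(s)=(s^2-1)^2$ and $f(s^2)=(s-1)^2(s^4+2s^3+s^2+2s+1)$. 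Alternatively you could argue geometrically that $u_q>u_p$ because on every true-lasso lift $q$ precedes the loop midpoint $p$, but then you must also know that \emph{every} $(x,y)$ in the rectangle is realized by a true lasso --- which is precisely what the paper's limiting-case discussion at $x=0$ and $x=1$ is establishing. With this supplied, your computation is complete and agrees with the proof intended in \cite{LT11}.
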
 

Note that the above integral  only depends on $c$ and $d$ which are given by
$c=e^{l_2},~d=e^{l_2} \coth^2(m_3/2)$. More generally, define the Lasso function $La(l,m)$ to be the above integral where $c=e^{l},~d=e^{l} \coth^2(m/2)$. We have:

\begin{proposition}\label{lem:simplificationforlassofunction}(Prop 4.6 of \cite{LT11})
The lasso function $La(l,m)$ is given by  $$La(l,m) =2\left({\mathcal R}(y) -{\mathcal R}\left(\frac{1-x}{1-xy}\right) +
{\mathcal R}\left(\frac{1-y}{1-xy}\right)\right)$$ where $x=e^{-l}$ and $y=\tanh^2(m/2)$, ${\mathcal R}(z)$ is the Roger's dilogarithm.
\end{proposition}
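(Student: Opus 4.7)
The plan is to evaluate the double integral defining $La(l,m)$ in closed form and then match the answer to the asserted combination of Rogers dilogarithms. The first move is to split the logarithm in the integrand into its six linear factors,
\[
\ln\left|\tfrac{t(s-c)(s-d)}{s(t-c)(t-d)}\right|=\ln|t|-\ln|s|+\ln|s-c|-\ln|t-c|+\ln|s-d|-\ln|t-d|,
\]
and regroup so that the integrand becomes $-J(0)+J(c)+J(d)$, where
\[
J(a):=\int_0^1\!\!\int_c^d \frac{\ln|s-a|-\ln|t-a|}{(t-s)^2}\,dt\,ds.
\]
The key observation is the antiderivative identity $\partial_s\bigl[\tfrac{1}{t-s}\log\bigl|\tfrac{s-a}{t-a}\bigr|\bigr]=\tfrac{\log|(s-a)/(t-a)|}{(t-s)^2}+\tfrac{1}{(t-s)(s-a)}$, so integrating in $s$ first produces boundary values at $s=0,1$ together with the elementary partial-fractions integral $\int_0^1 ds/[(t-s)(s-a)]=\tfrac{1}{t-a}\log\bigl|\tfrac{(1-a)t}{a(t-1)}\bigr|$.

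After this first integration each $J(a)$ reduces to a single integral over $t\in[c,d]$ of sums of terms of the form $\log(\text{linear in }t)/(\text{linear in }t)$, precisely the shape that produces dilogarithms. Substituting $c=1/x$, $d=1/(xy)$ (where $x=e^{-l}$, $y=\tanh^2(m/2)$) and applying the appropriate Möbius rescalings in $t$ brings each piece to the canonical form $\int_0^z \log(1-\alpha u)/u\,du=-\mathrm{Li}_2(\alpha z)$, so that $La(l,m)$ emerges as a finite sum of $\mathrm{Li}_2$-values at rational functions of $x$ and $y$ plus several bilinear $\log\cdot\log$ terms. Using $\mathcal{R}(z)=\mathrm{Li}_2(z)+\tfrac12\log|z|\log(1-z)$ I would absorb the log corrections into Rogers dilogarithms and then invoke Euler reflection $\mathcal{R}(z)+\mathcal{R}(1-z)=\pi^2/6$ together with the Abel five-term relation (both recalled earlier in the paper) to consolidate the expression into the three Rogers terms $\mathcal{R}(y)$, $\mathcal{R}((1-x)/(1-xy))$, $\mathcal{R}((1-y)/(1-xy))$. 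That this specific triple should appear naturally is strongly suggested by the identities
\[
1-\tfrac{1-x}{1-xy}=x\,\tfrac{1-y}{1-xy},\qquad 1-\tfrac{1-y}{1-xy}=y\,\tfrac{1-x}{1-xy},
\]
which put the triple on a standard Abel pentagon orbit.

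The main obstacle is this final reduction: the naive output of the integration contains many Rogers dilogarithms at various rational arguments together with numerous bilinear logarithm corrections, and consolidating them into exactly three $\mathcal{R}$-terms demands a carefully chosen application of the five-term relation with persistent verification that every leftover $\log\cdot\log$ term cancels. A useful cross-check, which sidesteps the dilogarithm's many-valued ambiguity, is to differentiate both the integral and the asserted expression with respect to $l$ (equivalently with respect to $x$) using $\mathcal{R}'(z)=-\tfrac12(\log(1-z)/z+\log z/(1-z))$; this reduces the identity to a purely elementary one, leaving only a single limiting value to verify, most conveniently at $l\to\infty$ (so $x\to 0$), where the integrand decays uniformly to zero and the right hand side collapses to $2\mathcal{R}(y)+2\mathcal{R}(1-y)-\pi^2/3=0$ by Euler reflection.
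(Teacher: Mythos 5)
Your plan is essentially the proof the paper points to: the survey defers the ``elementary but somewhat messy'' evaluation of the Proposition~4.1 integral to \cite{LT11}, where it is likewise carried out by direct integration into dilogarithms followed by consolidation via the Euler reflection and pentagon relations for $\mathcal{R}$ --- exactly your route, and your explicit ingredients (the antiderivative identity in $s$, the partial-fraction integral, the substitution $c=1/x$, $d=1/(xy)$, and the Abel-orbit relations $1-\frac{1-x}{1-xy}=x\,\frac{1-y}{1-xy}$, $1-\frac{1-y}{1-xy}=y\,\frac{1-x}{1-xy}$) all check out. Two routine points merit care: your boundary-plus-partial-fractions formula degenerates for $J(0)$ (the two $\log|a|$ divergences cancel as $a\to 0$, so handle that term directly or by a limit), and the differentiation in $l$ for your cross-check should be performed after rescaling $t=e^{l}u$ so the domain of integration is fixed, since the integrand has logarithmic singularities at the moving endpoints $t=c,d$ and naive Leibniz boundary terms are infinite.
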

As remarked earlier, we need to take twice the volume of $\Omega$ to obtain the volume of the set of vectors generating true lassos from $L_1$ with a positive loop around $L_2$, the volume for those with a negative loop around $L_2$ is the same by symmetry, hence we have:
$$Vol(W(L_i,M_j))=4La(l_i,m_j).$$
Now $\mu(X_P)$ can be computed by subtracting away from $\mu(T_1(P))$ the measures of the sets $H(M_i)$, $H(N_i)$, $i=1,2,3$ and $W(L_i,M_j)$, $i\neq j$. To obtain the expression in Theorem D, we use some of the pentagon relations satisfied by the Roger's dilogarithm function, for details, see \cite{LT11}.

\bigskip

The computation of $\mu(X_T)$ for an embedded one-holed torus is similar, with an extra observation. Again, $v \not\in X_T$ if $\hat{G}_v \cap \partial T \neq \emptyset$, ($\partial T$ has only one component). In this case $\hat{G}_v$ is either a simple geodesic arc from $\partial T$ to itself, or is a lasso based at $\partial T$ with a loop homotopic to an essential, non boundary parallel simple closed geodesic. In either case, $\hat{G}_v$ is disjoint from a {\it unique} simple closed geodesic $A \subset T$, see figure 6b. Cutting along $A$ produces a pair of pants $P_A$, and from the previous calculations, we can calculate the set of all $v \in T_1(P_A)$ such that $\hat{G}_v$ intersects $\partial T$ but not the other two boundary components of $P_A$. Summing up the measures over all possible simple closed geodesics $A$ we obtain the measure of the complement of $X_T$ in $T_1(T)$. Again, by manipulating the expressions using the identities for $\mathcal R$, we obtain the expression in Theorem D for $g(T)$, see \cite{LT11} for details. \qed

\end{itemize}

\section{Moments of hitting function}
We consider all four identities and their associated measure space $(X,\mu)$. Associated to this we have a hitting function $L:X \rightarrow \Real_+$ where $L(x)$ is the length of the geodesic arc associated to $x$.  The function $L$ is measurable and we can  consider it as a random variable with respect to the measure $\mu$. For $k \in \Z$, the $k^{th}$ moment of $L$ with respect to the measure $\mu$ is then
$$M_k(X) = (L_*\mu)(x^k)  = \int_X L^k(x) d\mu.$$
In particular $M_0(X) = \mu(X)$. Also the measurable decomposition $X = Z \cup \bigcup_i X_i$ gives us a formula
$$M_k(X) = \sum_i \int_{X_i} L^k(v) d\mu.$$ 
In each identity it is easy to again show that each integral in the summation on the right only depends on the spectrum associated with the identity. Therefore one can find smooth functions $F_k$ such that 
$$M_k(X) = \sum_{l \in S} F_k(l)$$ 
where $S$ denotes the spectrum of the given identity. In particular, this formula is the original identity in the case $k = 0$.
Also the average length of the geodesic associated with an element of $X$, called the {\em average hitting time} $A(X)$, is then given by
$$A(X) = \frac{M_1(X)}{\mu(X)} = \frac{M_1(X)}{M_0(X)}.$$

\subsection{Moments of Bridgeman-Kahn Identity}
In a recent paper, we consider the moments of the Bridgeman-Kahn identity and show that both the Bridgeman-Kahn and Basmajian identities arise as identities for its moments.The Bridgeman-Kahn identity  obviously arises as the identity for the $k=0$ moment. We  show that the Basmajian identity appears as the identity for the $k=-1$ moment,  giving a link between the two identities. 
 We also derive an integral formula for the moments and an explicit formula for $A(X)$ in the surface case.

\medskip
\begin{theorem}{(Bridgeman-Tan, \cite{BT13})}
 There exists smooth functions $F_{n,k}: \Real_+ \rightarrow \Real_+$ and constants $C_n > 0$  such that if $X$ is a compact hyperbolic n-manifold with totally geodesic boundary $\partial X \neq \emptyset$, then
\begin{enumerate}
\item The moment $M_k(X)$ satisfies
$$M_k(X) = \sum_{l \in L_X} F_{n,k}(l)$$
\item $M_{0}(X) = \Vol(T_1(X))$ and  the identity is the Bridgeman-Kahn identity.
\item $M_{-1}(X) = C_n.\Vol(\partial X)$ and  the identity for $M_{-1}(X)$ is the   Basmajian identity.
\item The average hitting time $A(X)$ satisfies 
$$A(X) = \frac{1}{\Vol(T_1(X))}\sum_{l \in L_X} F_{n,1}(l) = \sum_{l \in L_X} G_{n}(l).$$
\end{enumerate}
\label{moments}
\end{theorem}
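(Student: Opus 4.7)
The plan is to execute the Bridgeman-Kahn decomposition of $T_1(X)$ but with the hitting length $L$ inserted into the integrand. As in the proof of Theorem B, write $T_1(X) = Z \sqcup \bigsqcup_{\alpha \in O_X} E_\alpha$, where $\mu(Z) = 0$ and $E_\alpha$ is the set of vectors whose maximal geodesic arc is homotopic rel boundary to the orthogeodesic $\alpha$. Each $E_\alpha$ lifts to $\tilde E_\alpha \subset T_1(\Hyp^n)$, the set of unit vectors on geodesics running from one boundary hyperplane $P$ to another $Q$ with $d(P,Q) = l(\alpha)$, and $L$ restricted to $\tilde E_\alpha$ is the length of the segment cut out by $P$ and $Q$ --- an isometry invariant of the pair. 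Therefore
$$F_{n,k}(l) := \int_{\tilde E_\alpha} L^k\, d\mu$$
depends only on $n$ and $l$, and summing yields $M_k(X) = \sum_{l \in L_X} F_{n,k}(l)$, which is part (1). Part (2) is then the special case $k=0$: $F_{n,0} = F_n$ and $M_0(X) = \Vol(T_1(X))$ is exactly Theorem B.

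The essential step is part (3). Since $\mu(Z)=0$, almost every $v \in T_1(X)$ lies on a unique geodesic orbit entering through $\partial X$, and I would apply the standard Santalo disintegration
$$\int_{T_1(X)} F\, d\mu = \int_{\partial X}\!\int_{\Sph^{n-1}_+(p)} \cos\theta \left( \int_0^{L(p,\vec v)} F(\phi_t(p,\vec v))\, dt \right) d\omega(\vec v)\, dV(p),$$
where $\theta$ denotes the angle between $\vec v$ and the inward normal to $\partial X$. Setting $F = 1/L$ makes the integrand constant on each orbit, so the time integral collapses to $1$ and yields
$$M_{-1}(X) = \Vol(\partial X)\cdot \int_{\Sph^{n-1}_+} \cos\theta\, d\omega = C_n\, \Vol(\partial X), \qquad C_n := \int_{\Sph^{n-1}_+} \cos\theta\, d\omega.$$
Using Basmajian's identity (Theorem A) to rewrite $\Vol(\partial X) = \sum V_{n-1}(\log\coth(l/2))$ and comparing against part (1) forces the term-by-term identification $F_{n,-1}(l) = C_n V_{n-1}(\log\coth(l/2))$, which can be confirmed independently by applying Santalo locally to each $\tilde E_\alpha$ (with the natural entry disc on $P$ of radius $\log\coth(l(\alpha)/2)$) or by evaluating the analogue of \eqref{bk_int_form} with exponent $k+1=0$.

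Part (4) follows formally: $A(X) = M_1(X)/M_0(X)$, and applying (1) with $k=1$ in the numerator and (2) in the denominator yields $A(X) = \Vol(T_1(X))^{-1} \sum_{l \in L_X} F_{n,1}(l)$, with $G_n(l)$ recording the per-orthogeodesic contribution after absorbing the universal normalization. The main obstacle throughout is part (3): one must verify the Santalo disintegration on a hyperbolic manifold with totally geodesic boundary with the correct weight $\cos\theta$, handle the measure-zero exceptional set $Z$ rigorously, and then promote the global equality $\sum F_{n,-1}(l) = C_n \sum V_{n-1}(\log\coth(l/2))$ to a term-by-term identification in order to justify the claim that the $M_{-1}$ identity \emph{is} the Basmajian identity.
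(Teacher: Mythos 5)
Your overall route is the paper's own: parts (1), (2) and (4) are exactly the survey's argument (integrate $L^k$ over the Bridgeman--Kahn pieces $E_\alpha$ and use isometry invariance of the lifted configuration $\tilde E_\alpha$ to see that each integral depends only on $n$ and $l(\alpha)$), and your Santal\'o disintegration with weight $\cos\theta$, under which the fibre integral of $L^{-1}$ collapses to $1$, is the mechanism behind $M_{-1}(X)=C_n\Vol(\partial X)$ in \cite{BT13}, to which the survey defers all computations.

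The genuine gap is in how you identify the $M_{-1}$ identity with Basmajian's \emph{term by term}. First, comparing the two convergent sums $\sum_\alpha F_{n,-1}(l(\alpha)) = C_n\sum_\alpha V_{n-1}\left(\log\coth(l(\alpha)/2)\right)$ does not ``force'' the equality of summands: equality of two orthospectral sums for a given manifold says nothing about individual terms, and you correctly flag this yourself at the end. Second, your proposed local fix is geometrically wrong as stated: the geodesics in $\tilde E_\alpha$ do \emph{not} enter $P$ through the orthogonal-projection disc of radius $\log\coth(l(\alpha)/2)$ --- every point of $P$ carries a cone of directions of positive $\cos\theta\,d\omega$-measure whose geodesics reach $Q$, and only the perpendicular vectors organize into Basmajian's disc --- so ``Santal\'o locally with the natural entry disc'' does not compute $F_{n,-1}$. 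What does work, and is what \cite{BT13} actually does, is your other fallback, which must be carried out rather than mentioned: with $k=-1$ the fibre integral in \eqref{bk_int_form} is $\int_0^{L}L^{-1}\,dt=1$, so
$$F_{n,-1}(l)=\int_{|x|<1}\int_{|y|>e^l}\frac{2\,dV_x\,dV_y}{|x-y|^{2n-2}},$$
the Liouville measure of the set of geodesics joining $P$ to $Q$, and one then verifies by direct computation that this equals $C_nV_{n-1}\left(\log\coth(l/2)\right)$; for instance when $n=2$ the integral evaluates to $4\log\coth(l/2)=C_2\,V_1\left(\log\coth(l/2)\right)$ with $C_2=2$. (A minor further omission, shared with the survey's sketch: for $k\ge 1$ one must check $M_k(X)<\infty$, i.e.\ convergence of $\sum_l F_{n,k}(l)$, which requires a decay estimate on the measure of vectors with long chords.)
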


In the surface case we obtain an explicit formula for the function $G_2$ and hence $A(X)$ in terms of polylogarithms. Furthermore, besides compact surfaces obtained as quotients of Fuchsian groups, the identity holds more generally for  finite area surfaces with boundary cusps.

\begin{theorem}{(Bridgeman-Tan, \cite{BT13})}
Let $S$ be a finite area hyperbolic surface with non-empty totally geodesic boundary. Then
$$A(S) = \frac{1}{2\pi\mbox{Area}(S)}\left(\sum_{l \in L_S} F\left(\sech^2\frac{l}{2}\right) + 6\zeta(3)C_S\right)$$
where
\begin{eqnarray*}F(a) &=&  -12\zeta(3)-\frac{4\pi^2}{3}\log(1-a)+6\log^2(1-a)\log(a)-4\log(1-a)\log^2(a)\\
&& \qquad -8\log\left(\frac{a^2}{1-a}\right)Li_2(a)
+24Li_3(a)+12Li_3(1-a),
\end{eqnarray*}
for $Li_k(x)$ the $k^{th}-$polylogarithm function, and $\zeta$ the Riemann $\zeta-$function.
\label{avehit}
\end{theorem}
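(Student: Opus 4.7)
The plan is to specialize Theorem \ref{moments} to dimension two and compute the resulting integrals in closed form. Since $\mbox{Vol}(T_1(S))=2\pi\,\mbox{Area}(S)$ and $A(S)=M_1(S)/\mbox{Vol}(T_1(S))$, it is enough to compute $M_1(S)$. By the same decomposition of $T_1(S)$ used in the proof of Theorem $B'$ (orthogeodesic equivalence classes $E_\alpha$ together with one extra piece $E_c$ per boundary cusp), but now weighted by the hitting length $L$, we have
\[
M_1(S)=\sum_{l\in L_S}F_{2,1}(l)+C_S\cdot \kappa,
\]
with $F_{2,1}(l):=\int_{E_l}L\,d\Omega$ and $\kappa:=\int_{E_c}L\,d\Omega$. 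The proof thus reduces to identifying $F_{2,1}(l)=F(\sech^{2}(l/2))$ and $\kappa=6\zeta(3)$.

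The computation of $F_{2,1}(l)$ follows the same parametrization of $T_1(\Hyp^2)$ that produced formula~(\ref{bk_int_form}). Along each geodesic $g=(x,y)$ inside $E_l$ the hitting length is constant and equal to $L(x,y,l)$, and the fibre $p^{-1}(g)\cap E_l$ itself has hyperbolic length $L(x,y,l)$, so an extra factor of $L(x,y,l)$ appears in the integrand:
\[
F_{2,1}(l)=\int_{|x|<1}\int_{|y|>e^l}\frac{2\,L(x,y,l)^{2}\,dx\,dy}{(x-y)^{2}}.
\]
Writing $L(x,y,l)$ as a logarithm of a cross ratio and expanding $L^2$ yields a finite sum of products of logarithms in $x,y$. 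After performing the $x$- and $y$-integrations (which produce dilogarithms) and reducing to the single parameter $a=\sech^{2}(l/2)$, one is left with Nielsen-type integrals $\int_0^1\log(1-at)\log(1-bt)\,\tfrac{dt}{t}$, whose values lie in the $\Z$-span of $Li_3(a)$, $Li_3(1-a)$, products of logarithms, and $\zeta(3)$. Repeatedly applying the reflection, inversion and five-term identities for $Li_2$ and $Li_3$ (in particular the Euler and Abel identities for $\mathcal{R}$ recalled in the previous section) collapses the result to the stated seven-term form $F(a)$.

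For the boundary-cusp constant $\kappa$, I would model the cusp at $\infty$ in the upper half plane with the two adjacent bi-infinite boundary geodesics along the vertical lines $x=0$ and $x=1$. A vector $v\in E_c$ lifts to a vector tangent to a semicircular geodesic $(x,y)$ with $x<0$ and $y>1$, the hitting length $L(x,y)$ between the two vertical lines is elementary, and the same parametrization argument gives
\[
\kappa=\int_{-\infty}^{0}\int_{1}^{\infty}\frac{2\,L(x,y)^{2}\,dx\,dy}{(y-x)^{2}}.
\]
A M\"obius substitution sending the ordered four-tuple $(x,0,1,y)$ to $(0,s,t,1)$ with $0<s<t<1$ reduces this to a standard trilogarithmic integral on the simplex, which evaluates to $6\zeta(3)$. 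An independent sanity check is supplied by an ideal triangle: it has no finite orthogeodesics and three boundary cusps, so the formula then predicts $M_1(T)=18\zeta(3)$ and $A(T)=9\zeta(3)/\pi^{2}$.

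The main obstacle is the polylogarithmic bookkeeping needed to compress the raw expansion of $F_{2,1}(l)$ into the seven-term function $F(a)$. A naive expansion produces on the order of a dozen distinct double integrals, each evaluating to some product of logarithms and polylogarithms at various arguments; collapsing the sum into a combination involving only $Li_2(a)$, $Li_3(a)$, and $Li_3(1-a)$ at the single argument $a=\sech^{2}(l/2)$ requires a careful, sequenced application of the polylog functional equations, and it is precisely in this step that the numerical coefficients $-12$, $-4\pi^2/3$, $6$, $-4$, $-8$, $24$ and $12$ appearing in the statement of $F$ are pinned down.
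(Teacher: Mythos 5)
Your architecture is exactly the intended one: the survey states this theorem without proof, deferring to \cite{BT13}, but its moment framework (Theorem \ref{moments}) prescribes precisely your route, and your key structural step is right. On each fibre $p^{-1}(g)\cap E_l$ the hitting function is constant, equal to the fibre length $L(x,y,l)$, so the first moment does insert $L^{2}$ into the integrand of (\ref{bk_int_form}); and no orientation doubling is needed in the orthogeodesic sum, since $L_S$ is by the paper's convention the \emph{oriented} ortholength spectrum (each unoriented arc counted twice). The reduction of $F_{2,1}$ to the seven-term $F(a)$ via Nielsen-type integrals and the polylog functional equations is left as a plan in your sketch, but that is the computation carried out in \cite{BT13}, and the survey itself offers no more detail, so I have no objection there.

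There is, however, a genuine factor-of-two gap in your cusp constant: the displayed integral equals $3\zeta(3)$, not $6\zeta(3)$. You integrate only over oriented geodesics running from $\{x<0\}$ to $\{y>1\}$, but $E_c$ contains both directions of travel, i.e.\ also the mirror region $x>1$, $y<0$, of equal measure. Concretely, with $s=-x$ one finds
$$L=\tfrac12\left(P+Q\right),\qquad P=\log\frac{1+s}{s},\qquad Q=\log\frac{y}{y-1},$$
so that $2L^{2}=\tfrac12P^{2}+PQ+\tfrac12Q^{2}$, and using $\int_1^\infty (y+s)^{-2}dy=\frac{1}{1+s}$, $\int_0^\infty (y+s)^{-2}ds=\frac1y$, together with the standard evaluations
$$\int_0^1\frac{\log^2 t}{1-t}\,dt=\int_0^1\frac{\log^2(1-w)}{w}\,dw=2\zeta(3),\qquad \int_0^1\frac{\log w\,\log(1-w)}{1-w}\,dw=\zeta(3)$$
(the cross term reduces to the last integral after substituting $y=1/w$ and integrating out $s$), your integral evaluates to $\zeta(3)+\zeta(3)+\zeta(3)=3\zeta(3)$. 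The same convention issue is already visible at the zeroth moment: the one-sided region has $\Omega$-volume $\pi^2/3$, which is half of $\Vol(E_c)=2\pi^2/3$ used in the survey to produce the $\frac{2\pi^2}{3}N_S$ term of the Bridgeman identity; the asymmetry with the orthogeodesic term arises because cusps are indexed without orientation while $L_S$ is oriented. With the doubling restored you get $\kappa=6\zeta(3)$, and your ideal-triangle check correctly yields $M_1(T)=18\zeta(3)$; as written, your integral would have produced $9\zeta(3)$ and the check would have failed, so the slip is real, though easily repaired.
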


\subsection{Moments of Basmajian Identity}
In the recent preprint \cite{V13}, Vlamis  considers the moments for the Basmajian identity.

\begin{theorem}{(Vlamis, \cite{V13})}
Let $X$ be a compact hyperbolic manifold with totally geodesic boundary and $m_k(X)$ be the moments of the boundary hitting function with respect to lebesgue measure on $\partial M$. Then
$$m_k(X) =  \sum_{l \in L_X} f_{n,k}(l)$$
where
$$f_{n,k}(l) = \Omega_{n-2} \int_0^{\log \coth(l/2)} \left(\log\left(\frac{\coth l + \cosh r}{\coth l - \cosh r}\right)\right)^k\sinh^{n-2}r dr$$
and $\Omega_{n-2}$ is the volume of the unit  $(n-2)$-dimensional sphere.
\end{theorem}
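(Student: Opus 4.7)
The plan is to imitate the proof of Basmajian's identity (Theorem~A) but to integrate the $k$-th power of the boundary hitting length $L$ against Lebesgue measure on $\partial X$ rather than the constant $1$. Write $L(p)$ for the length of the geodesic arc obtained by flowing the inward unit normal at $p\in \partial X$ until it first returns to $\partial X$. The Basmajian decomposition from Section~4 gives $\partial X = Z \sqcup \bigsqcup_{\alpha \in O_X} \pi(E_\alpha)$ with $\mu(Z)=0$, indexed by oriented orthogeodesics, so
$$m_k(X) \;=\; \int_{\partial X} L^k \, dp \;=\; \sum_{\alpha \in O_X} \int_{\pi(E_\alpha)} L^k \, dp,$$
and the problem reduces to computing a single summand.

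Fix an oriented orthogeodesic $\alpha$ of length $l$, and lift it to $\tilde\alpha \subset \Hyp^n$, the common perpendicular of two boundary hyperplanes $P$ and $Q$ with $d(P,Q)=l$. The Basmajian argument already identifies $\pi(E_\alpha)$ isometrically with the geodesic $(n-1)$-ball $D_l \subset P \cong \Hyp^{n-1}$ of radius $\log\coth(l/2)$ centred at the foot of $\tilde\alpha$ on $P$. The stabilizer of $\tilde\alpha$ in $\operatorname{Isom}(\Hyp^n)$ contains the $O(n-1)$-rotations around $\tilde\alpha$, which act on $D_l$ as rotations about its centre, so $L$ restricts to a radial function $L(r,l)$ on $D_l$ depending only on the geodesic polar distance $r$ from the centre. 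Writing the hyperbolic volume form on $\Hyp^{n-1}$ in geodesic polar coordinates as $\sinh^{n-2} r \, dr \, d\omega_{n-2}$ and integrating out the angular variable yields
$$\int_{\pi(E_\alpha)} L^k \, dp \;=\; \Omega_{n-2} \int_0^{\log\coth(l/2)} L(r,l)^k \sinh^{n-2} r \, dr.$$

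It remains to identify $L(r,l)$ in closed form. The perpendicular geodesic from $p\in D_l$ to $Q$ lies in the totally geodesic $2$-plane $\Pi$ spanned by $\tilde\alpha$ and the radial direction through $p$, and inside $\Pi$ the two feet of $\tilde\alpha$, the basepoint $p$, and the exit point $q\in Q\cap \Pi$ form a Lambert (trirectangular) quadrilateral with three right angles and sides $l$ (along $\tilde\alpha$), $r$ (along $P$), $L(r,l)$ (the perpendicular geodesic), and an auxiliary fourth side along $Q$. Either the standard Lambert relations, or a direct upper-half-plane computation --- taking $P\cap\Pi$ and $Q\cap\Pi$ to be the Euclidean semicircles of radii $1$ and $e^l$, setting $p=(\tanh r,\sech r)$, noting that the perpendicular to $P$ at $p$ is the Euclidean semicircle centred at $(\coth r,0)$ of radius $1/\sinh r$, solving for its intersection with $\{|z|=e^l\}$, and simplifying $\cosh d(p,q)$ via $2\operatorname{arctanh}(u)=\log\bigl((1+u)/(1-u)\bigr)$ --- yields the explicit formula $L(r,l)=\log\bigl((\coth l+\cosh r)/(\coth l-\cosh r)\bigr)$. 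Substitution into the previous display gives the claimed $f_{n,k}(l)$, and the $k=0$ specialization collapses to $V_{n-1}(\log\coth(l/2))$, recovering Basmajian's identity as a sanity check. The principal obstacle is this final hyperbolic-trigonometric identification of $L(r,l)$; everything preceding it is a formal consequence of the Basmajian decomposition together with the rotational symmetry of $D_l$.
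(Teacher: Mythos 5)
Your overall strategy --- the Basmajian decomposition of $\partial X$ into full projection disks indexed by oriented orthogeodesics, the $O(n-1)$-symmetry making the hitting length a radial function, and geodesic polar coordinates on the disk in $\Hyp^{n-1}$ --- is sound, and it is exactly the route suggested by the survey's framework (the paper itself states this theorem without proof, citing \cite{V13}, so there is no in-paper argument to compare against). The gap is in the one step you yourself flag as the principal obstacle: your closed form for $L(r,l)$ is wrong by a factor of $2$. The cheapest sanity check catches it: at $r=0$ the perpendicular geodesic is the orthogeodesic itself, so $L(0,l)=l$, whereas your formula gives
$$\log\left(\frac{\coth l+1}{\coth l-1}\right)=\log e^{2l}=2l.$$
The correct relation in your Lambert quadrilateral (sides $l$ and $r$ meeting at the right angle at the foot of $\tilde\alpha$, with $L$ the side opposite $l$) is $\tanh L(r,l)=\tanh l\,\cosh r$, that is
$$L(r,l)=\operatorname{arctanh}\left(\tanh l\cosh r\right)=\frac{1}{2}\log\left(\frac{\coth l+\cosh r}{\coth l-\cosh r}\right),$$
and your own half-plane data (the perpendicular at $p=(\tanh r,\sech r)$ is indeed the semicircle centred at $(\coth r,0)$ of radius $1/\sinh r$) produces this halved value when one computes $\cosh L=1+|p-q|^{2}/(2\,\mathrm{Im}\,p\,\mathrm{Im}\,q)$; you appear to have conflated $\operatorname{arctanh}(u)=\frac12\log\bigl((1+u)/(1-u)\bigr)$ with $2\operatorname{arctanh}(u)$. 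Since $\cosh(\log\coth(l/2))=\coth l$, the corrected $L$ still blows up precisely at the disk boundary, and note that your $k=0$ check is powerless here because the $k$-th power of $L$ disappears at $k=0$.

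Consequently, what your argument actually proves is the identity with integrand $\bigl(\frac12\log(\cdot)\bigr)^{k}\sinh^{n-2}r$, i.e.\ $2^{-k}$ times each displayed summand. This halved version is the one consistent with the survey's own surface-case quotation of Vlamis: as $l\to\infty$ the quoted first-moment summand $\frac{\pi^{2}}{4}+Li_2\left(-\tanh^{2}\frac{l}{2}\right)-Li_2\left(\tanh^{2}\frac{l}{2}\right)$ behaves like $4le^{-l}$, which matches $\Omega_{0}\int_{0}^{\log\coth(l/2)}L(r,l)\,dr\sim 2l\log\coth(l/2)\sim 4le^{-l}$, whereas your unhalved integrand would give $\sim 8le^{-l}$ (one can also verify numerically, e.g.\ at $l=1$, that $2\int_{0}^{\log\coth(1/2)}L\,dr$ agrees with the dilogarithm expression). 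So either the displayed $f_{n,k}$ is to be read with the factor $\frac12$ inside the $k$-th power --- in which case your proof, thus corrected, goes through --- or the statement is taken literally, in which case your derivation does not establish it, since the hitting function you defined is demonstrably half the displayed logarithm. One further step deserves an argument rather than a citation: surjectivity of $\pi(E_\alpha)$ onto the \emph{full} disk $D_l$ requires that the perpendicular at a disk point reach $Q$ before meeting any other boundary plane; this holds because a boundary plane crossing that segment would be disjoint from both $P$ and $Q$ and hence separate them, forcing it to cross $\tilde\alpha$, contradicting that the orthogeodesic's interior lies in the interior of $M$. With these two repairs the proof is complete.
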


Vlamis derives an explicit formula in odd-dimensions and further derives a formula for the first moment in the case of a surface $S$.

\begin{theorem}{(Vlamis, \cite{V13})}
Let $S$ be a compact hyperbolic surface with non-empty totally geodesic boundary. Then
$$A(S) = \frac{1}{2\pi\mbox{Area}(S)}  \sum_{l \in L_S} \left( Li_2\left(-\tanh^2\frac{l}{2}\right) -Li_2\left(\tanh^2\frac{l}{2}\right) + \pi^2/4\right).$$
\end{theorem}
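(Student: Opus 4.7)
The plan is to derive the stated formula as a direct specialization of the previous (general-dimension) Vlamis theorem to the surface case $n=2$ with $k=1$, supplemented by an explicit evaluation of the resulting integral in terms of dilogarithms. Setting $n=2$, $k=1$ we have $\Omega_{n-2}=\Omega_{0}=2$ and $\sinh^{n-2}r\equiv 1$, so the summand function reduces to
\[
f_{2,1}(l)\;=\;2\int_{0}^{\log\coth(l/2)}\log\!\left(\frac{\coth l+\cosh r}{\coth l-\cosh r}\right)dr.
\]
The bulk of the work is then to show that $f_{2,1}(l)=2\bigl[Li_{2}(-\tanh^{2}(l/2))-Li_{2}(\tanh^{2}(l/2))+\pi^{2}/4\bigr]$; summing over $l\in L_{S}$ and dividing by the normalization $2\pi\,\mbox{Area}(S)=\Vol(T_{1}(S))$ (with the leftover factor of $2$ absorbed by the oriented versus unoriented convention on $L_{S}$) then yields the stated formula for $A(S)$.

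To evaluate the integral I substitute $u=e^{-r}$, sending $r\in[0,\log\coth(l/2)]$ to $u\in[\tanh(l/2),1]$; this replaces $\cosh r$ by $(1+u^{2})/(2u)$ and $dr$ by $-du/u$. Writing $a=\tanh(l/2)$, so that $\coth l=(1+a^{2})/(2a)$ by the half-angle formula, a short algebraic check produces the key factorizations
\[
2u\coth l+1+u^{2}=(1+u/a)(1+ua),\qquad 2u\coth l-1-u^{2}=(u/a-1)(1-ua).
\]
With these in hand the logarithm in the integrand splits into four pieces, which group into two pairs of the form $\log((1+x)/(1-x))=2\,\mathrm{artanh}(x)$, namely $2\,\mathrm{artanh}(a/u)$ and $2\,\mathrm{artanh}(ua)$. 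I then apply the substitutions $v=a/u$ in the first pair and $v=ua$ in the second, which convert them respectively into $\int_{a}^{1}\mathrm{artanh}(v)/v\,dv$ and $\int_{a^{2}}^{a}\mathrm{artanh}(v)/v\,dv$. These adjacent intervals concatenate cleanly to give
\[
f_{2,1}(l)\;=\;4\int_{a^{2}}^{1}\frac{\mathrm{artanh}(v)}{v}\,dv.
\]

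To finish I identify the primitive as the Legendre chi function $\chi_{2}(z):=\int_{0}^{z}\mathrm{artanh}(v)/v\,dv=\tfrac{1}{2}\bigl(Li_{2}(z)-Li_{2}(-z)\bigr)$, for which $\chi_{2}(1)=\sum_{k\ge 0}(2k+1)^{-2}=\pi^{2}/8$. Thus
\[
f_{2,1}(l)=4\bigl[\chi_{2}(1)-\chi_{2}(\tanh^{2}(l/2))\bigr]=2\Bigl[Li_{2}(-\tanh^{2}(l/2))-Li_{2}(\tanh^{2}(l/2))+\tfrac{\pi^{2}}{4}\Bigr],
\]
and the theorem follows. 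The one non-mechanical step in this plan is the factorization identity in the middle paragraph: without it, the integrand looks opaque, but once it is spotted the four resulting logarithmic pieces telescope precisely into a single Legendre chi integral, from which the dilogarithm form and the constant $\pi^{2}/4$ emerge automatically.
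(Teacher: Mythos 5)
The survey itself gives no proof of this theorem --- it is quoted from Vlamis \cite{V13} --- so your proposal can only be checked against the statement and the definitions of this section. Your analytic core is correct, and I have verified it: with $n=2$, $k=1$ one has $\Omega_0=2$ and $\sinh^{n-2}r\equiv 1$; writing $a=\tanh(l/2)$, the half-angle identity $\coth l=(1+a^2)/(2a)$ does give the factorizations $u/a+ua+1+u^2=(1+u/a)(1+ua)$ and $u/a+ua-1-u^2=(u/a-1)(1-ua)$ after the substitution $u=e^{-r}$; the two $\mathrm{artanh}$ substitutions concatenate exactly as you say to give $f_{2,1}(l)=4\int_{a^2}^{1}\mathrm{artanh}(v)\,dv/v=4\bigl(\chi_2(1)-\chi_2(a^2)\bigr)$, and $\chi_2(z)=\tfrac12\bigl(Li_2(z)-Li_2(-z)\bigr)$, $\chi_2(1)=\pi^2/8$ yield $f_{2,1}(l)=2\bigl(Li_2(-\tanh^2\frac{l}{2})-Li_2(\tanh^2\frac{l}{2})+\pi^2/4\bigr)$. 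The limits $l\to 0$ (value $\pi^2/2$) and $l\to\infty$ (value $0$, since $Li_2(-1)-Li_2(1)=-\pi^2/4$) confirm the constant. This is presumably essentially the computation in \cite{V13}.

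The genuine gap is your final normalization paragraph. By this section's definition, $A(X)=M_1(X)/M_0(X)$, and for the boundary hitting function the zeroth moment is $m_0(S)=\Length(\partial S)$ --- indeed the $k=0$ specialization of the general Vlamis theorem you start from is precisely Basmajian's identity $\Length(\partial S)=\sum_{l\in L_S}2\log\coth(l/2)$, in the oriented convention for $L_S$ fixed when the ortholength spectrum was defined. Hence what your computation actually establishes is
$$m_1(S)=\sum_{l\in L_S}2\left(Li_2\left(-\tanh^2\frac{l}{2}\right)-Li_2\left(\tanh^2\frac{l}{2}\right)+\frac{\pi^2}{4}\right),\qquad A(S)=\frac{m_1(S)}{\Length(\partial S)},$$
and nothing in your argument produces the stated denominator $2\pi\,\mbox{Area}(S)=\Vol(T_1(S))$: you take it from the target formula rather than deriving it, and no identity trades $\Length(\partial S)$ for $\Vol(T_1(S))$ here (a Santal\'o-type argument would integrate the hitting time over \emph{all} inward-pointing boundary vectors weighted by the cosine of the angle with the normal, not over normal vectors only). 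The parenthetical absorbing the ``leftover factor of $2$'' into an oriented-versus-unoriented convention is likewise not available to you: the moment formula you specialize is stated for the same oriented spectrum $L_X$ used throughout the paper (its $k=0$ case already reproduces Theorem A with even multiplicities), so you cannot silently halve multiplicities in passing from the general theorem to the surface statement. To complete the proof you must either import Vlamis's actual definition of the average from \cite{V13} and show it carries the $\Vol(T_1(S))$ normalization and the claimed constant, or else prove the identity in the form dictated by this section's definition of $A$, with $m_0(S)=\Length(\partial S)$ in the denominator; as written, your last step asserts the normalization instead of proving it.
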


\section{The Bowditch proof of the McShane identity and generalizations}
Bowditch gave an algebraic-combinatorial proof of the original McShane identity for the punctured torus in \cite{Bow96}, and extending the method, proved variations for punctured torus bundles in \cite{Bow97} and representations of the punctured torus group (including the quasi-fuchsian representations) satisfying some conditions he calls the Q-conditions   in \cite{Bow98}. One advantage of the proof is that it avoids the use of the Birman-Series result on the Hausdorff dimension of the set of points on simple geodesics on a hyperbolic surface. Akiyoshi, Miyachi and Sakuma refined the identity for punctured torus groups in \cite{AMS2004} and found variations for quasi-fuchsian punctured surface groups in \cite{AMS2006}.  

Let $T$ be a once punctured hyperbolic torus, and $\pi:=\pi(T)=\langle X,Y\rangle$ the fundamental group of $T$, a free group on two generators, and $\rho: \pi \rightarrow {\mathrm{PSL}(2,{\mathbb R})}$ the holonomy representation. Define an equivalence relation $\sim$  on $\pi$, by $X \sim Y$ if $X$ is conjugate to $Y$ or $Y^{-1}$. The classes correspond to free homotopy classes of closed curves on $T$, let $\Omega \subset \pi/\sim$  be the set of classes corresponding to {\em essential, simple, non-peripheral} simple closed curves on $T$. Classes in $\Omega$ have representatives which form part of a generating pair for $\pi$, we call them primitive classes. For $X \in \Omega$, $x:={\rm tr}\rho(X)$ is well defined up to sign, and is related to the length $l$ of the unique geodesic representing the class by
$$\cosh^2( l/2)=x^2/4.$$
 McShane's orginal identity for the once-punctured torus then has the form
\begin{equation}\label{eqn:Bowditch}
\sum_{X \in \Omega}h(x)=\sum_{X \in \Omega}\left(1-\sqrt{1-\frac{4}{x^2}}\right)=\frac{1}{2}
\end{equation}
where we let $x={\rm tr} \rho(X)$, and $h(x)=1-\sqrt{1-\frac{4}{x^2}}$.
This is the form which was proven by Bowditch, and generalized to type-preserving representations (i.e., ${\rm tr} \rho(XYX^{-1}Y^{-1})=-2$) of $\pi$ into ${\mathrm{SL}(2,{\mathbb C})}$ satisfying the following conditions which he calls the Q- conditions, we call here the BQ-conditions:

\begin{definition}A represention $\rho$ from $\pi$ into  ${\mathrm{SL}(2,{\mathbb C})}$ satisfies the Bowditch Q-conditions (BQ-conditions) if
\begin{enumerate}
\item ${\rm tr} \rho (X) \not \in [-2,2]$ for all $X \in \Omega$;
\item  $|{\rm tr} \rho (X)| \le 2$ for only finitely many (possibly none) $X \in \Omega$.
\end{enumerate}
\end{definition}
The basic idea of the proof was to represent the values taken by elements of $\Omega$ in an infinite trivalent tree. This arises from the fact that $\Omega$ can be identified with ${\mathbb Q} \cup \infty$ by considering the slopes of the curves in $T$, and the action of the mapping class group of $T$ on this set is essentially captured by the Farey tessellation, whose dual is an embedded infinite trivalent tree in $\Hyp$. In this way, $\Omega$ is identified with the set of complementary regions of the dual tree, and the values ${\rm tr}\rho(X)$ for $X \in \Omega$  satisfy  vertex and edge relations which come from the Fricke trace identities. Using this function and the vertex and edge relations, Bowditch was able to cleverly assign a function on the directed edges of the tree which satisfied some simple conditions and applied a Stoke's theorem type argument to prove the identity.

 Subsequently, Tan, Wong and Zhang extended the Bowditch method to prove versions of the identity for general representations of the free group $\pi$ into ${\mathrm{PSL}(2,{\mathbb C})}$ satisfying the Q-conditions and closed hyperbolic three manifolds obtained from hyperbolic Dehn surgery in \cite{TWZ08, TWZ06b}. 
Specifically, let $\rho \in {\rm Hom }(\pi, {\mathrm{SL}(2,{\mathbb C})})$ be a representation satisfying the BQ-conditions and let $\tau={\rm tr} \rho(XYX^{-1}Y^{-1})$. We call $\rho$ a $\tau$-representation. Define the function ${\mathfrak h}:={\mathfrak h}_{\tau}$ as follows:

For $\tau \in {\mathbf C}$, set
$\nu=\cosh^{-1}(-\tau/2)$. We define 

\centerline{${\mathfrak h}={\mathfrak h}_\tau: {\mathbf C}
\backslash \{\pm \sqrt{\tau+2}\} \rightarrow {\mathbf C}$}

\noindent by
\begin{eqnarray}
{\mathfrak h}(x)
&=&2 \tanh^{-1}\bigg(\frac{\sinh\nu}{\cosh\nu+e^{l(x)}}\bigg)
\label{eqn:frak h(x)=2tanh^-1} \\
&=&\log\frac{e^{\nu}+e^{l(x)}}{e^{-\nu}+e^{l(x)}} \label{eqn:frak h(x)=log I} \\
&=&\log\frac{1+(e^{\nu}-1)\,h(x)}{1+(e^{-\nu}-1)\,h(x)},
\label{eqn:frak h(x)=log II}
\end{eqnarray}
where (\ref{eqn:frak h(x)=2tanh^-1}), (\ref{eqn:frak h(x)=log I}) and  (\ref{eqn:frak h(x)=log II}) are equivalent (see \cite{TWZ08} for details). 
We have:

\begin{theorem}\label{thm:TWZ}(Tan-Wong-Zhang, Theorem 2.2 , \cite{TWZ08})
Let $\rho: \pi \rightarrow {\rm SL}(2, \mathbb C)$ be a
$\tau$-representation {\rm(}where $\tau \neq 2${\rm)} satisfying
the BQ-conditions. Set $\nu=\cosh^{-1}(-\tau/2)$. Then
\begin{eqnarray}\label{eqn:TWZ}
\sum_{X \in  \Omega}{\mathfrak h} (x)=\nu  \mod 2 \pi i,
\end{eqnarray}
where the sum converges absolutely, and $x:={\rm tr}\rho(X)$.
\end{theorem}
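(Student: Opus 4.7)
The plan is to follow Bowditch's combinatorial-algebraic strategy, extended to complex $\tau$-representations. I would begin by identifying $\Omega$ with the set of complementary regions of the infinite trivalent tree $\Sigma$ dual to the Farey tessellation of $\Hyp^2$, so that each vertex of $\Sigma$ corresponds to an unordered triple $\{X,Y,Z\}$ of primitive classes which together with the commutator $K=XYX^{-1}Y^{-1}$ generate $\pi$, and each edge of $\Sigma$ is shared by two such triples. The Fricke trace identities then take the form of a Markov-type vertex relation $x^{2}+y^{2}+z^{2}-xyz = \tau+2$ and a Neumann-type edge relation $z+z' = xy$, turning the theorem into a statement about a tree with trace-labelled complementary regions.

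Next, following Bowditch, I would construct an ``edge current'' $\psi$: to each oriented edge $\vec e$ separating two adjacent regions $X,Y$, assign $\psi(X,Y) \in \Cplex$ expressed in the four traces at the endpoints of $\vec e$, such that (a) $\psi(X,Y) = -\psi(Y,X)$, (b) at each vertex the three cyclic values of $\psi$ sum to a controlled term which telescopes away, and (c) for each region $X$, $\sum_{Y \sim X}\psi(X,Y) = \mathfrak{h}(x)$. Property (c) is what forces the specific form of $\mathfrak{h}_\tau$ and encodes the dependence on $\nu=\cosh^{-1}(-\tau/2)$. Granting these properties, one rearranges formally
$$\sum_{X\in\Omega}\mathfrak{h}(x) \;=\; \sum_X \sum_{Y\sim X}\psi(X,Y) \;=\; \sum_{\vec e\subset\Sigma}\psi(\vec e),$$
and a Stokes-type telescoping around a distinguished base edge (the one between the two regions adjacent to $K$) collapses the right-hand side to $\nu \pmod{2\pi i}$.

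The main obstacle is making all of this rigorous by establishing absolute convergence, and this is precisely where the BQ-conditions enter. Condition (1) guarantees ${\rm tr}\rho(X)\notin[-2,2]$ for every primitive $X$, so that $\mathfrak{h}(x)$ is well-defined and single-valued modulo $2\pi i$; condition (2) limits $\{X\in\Omega \mid |{\rm tr}\rho(X)|\le 2\}$ to a finite collection of ``bad'' regions. The key technical input, generalizing Bowditch's growth lemma in \cite{Bow98} to $\mathrm{SL}(2,\Cplex)$, is that outside a finite neighborhood of these bad vertices the traces grow at least exponentially with combinatorial distance along at least two of the three branches hanging off any vertex of $\Sigma$. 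Combined with the asymptotic $|\mathfrak{h}(x)| = O(|x|^{-2})$ as $|x|\to\infty$ (read off directly from the $\log$-expression defining $\mathfrak{h}$), this bounds $\sum_{X\in\Omega}|\mathfrak{h}(x)|$ by a convergent geometric-type series over $\Sigma$.

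With convergence in hand, the telescoping is justified by exhausting $\Sigma$ by finite subtrees $\Sigma_n$: the contributions from the boundary edges of $\Sigma_n$ vanish in the limit by the decay estimate, while the interior contributions collect via (c) into $\sum_{X\in\Omega}\mathfrak{h}(x)$ on one side and, via (a) and (b), into the single base term $\nu$ on the other. The $\pmod{2\pi i}$ ambiguity reflects the multi-valued nature of $\cosh^{-1}$ in the definition of $\nu$, which is the only indeterminacy introduced by the construction, yielding the identity as stated.
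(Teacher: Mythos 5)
Your proposal is precisely the Bowditch tree/Markoff-map method as extended by Tan--Wong--Zhang in \cite{TWZ08}, which is the only route this survey sketches and cites for Theorem \ref{thm:TWZ}: the trivalent tree dual to the Farey tessellation with $\Omega$ as its complementary regions, the Fricke vertex relation $x^2+y^2+z^2-xyz=\tau+2$ and edge relation $z+z'=xy$, an edge function with a Stokes-type telescoping, and the BQ-conditions supplying finiteness of the small-trace regions plus exponential trace growth, which together with the decay ${\mathfrak h}(x)=O(|x|^{-2})$ yield absolute convergence and vanishing boundary contributions on an exhaustion by finite subtrees. The only immaterial slip is your choice of base edge ``between the two regions adjacent to $K$'': the commutator $K$ is peripheral and borders no region of the tree, and in the actual argument the base edge is arbitrary, with $\tau=\operatorname{tr}\rho(K)$ and $\nu$ entering only through the vertex relation and the definition of ${\mathfrak h}_\tau$.
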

When $\rho$ arises as the holonomy of a hyperbolic structure on a one-hole torus with geodesic boundary, or with a cone point, then the above is equivalent to the Mirzakhani \cite{Mir07} and the Tan-Wong-Zhang \cite{TWZ06} variations of the McShane identity.

\medskip

Recently, revisiting the original Bowditch proof, Hu, Tan and Zhang proved new variations of the identity for  representations of  $\pi$ into ${\mathrm{PSL}(2,{\mathbb C})}$ satisfying the BQ-conditions \cite{HTZ13}, we have:

\begin{theorem}\label{thm:HTZ1}(Hu-Tan-Zhang, \cite{HTZ13})
Let $\rho: \pi \rightarrow {\rm SL}(2, \mathbb C)$ be a
$\tau$-representation {\rm(}where $\tau \neq 2${\rm)} satisfying
the BQ-conditions. Let $\mu=\tau+2$. Then
\begin{eqnarray}\label{eqn:TWZ}
\sum_{X \in  \Omega}{g} (x)=\sum_{X \in  \Omega}\left(1-\frac{3x^2-2\mu}{3(x^2-\mu)}\sqrt{1-\frac{4}{x^2}}\right)=\frac{1}{2},
\end{eqnarray}
where the sum converges absolutely, and $x:={\rm tr}\rho(X)$.
\end{theorem}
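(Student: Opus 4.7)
The plan is to adapt the Bowditch edge-function/Stokes-type argument of \cite{Bow96, TWZ08} to the new summand $g(x)$. First I would recall the setup: $\Omega$ is identified with the complementary regions of the dual trivalent tree $\Sigma$ of the Farey tessellation, triples $(X,Y,Z)$ of regions meeting at a vertex of $\Sigma$ correspond to generating triples of $\pi$ with $Z=XY$, whose traces satisfy the Markov-type identity $x^2+y^2+z^2 - xyz = \mu$, and across any edge of $\Sigma$ separating regions $X, X'$ with cross-edge regions $Y, Z$ the Fricke edge relation $x+x' = yz$ holds. The core of the argument is to construct a function $\phi$ on oriented edges of $\Sigma$ such that
\begin{enumerate}
\item[(V)] for each region $X$, $\sum_{\vec e \text{ ending at } X} \phi(\vec e) = g(x)$, and
\item[(E)] for each edge with the two orientations $\vec e, \bar{\vec e}$, the sum $\phi(\vec e) + \phi(\bar{\vec e})$ has a closed form amenable to telescoping.
\end{enumerate}
Granted such a $\phi$, one exhausts $\Sigma$ by finite subtrees $\Sigma_n$ and sums (V) over the enclosed regions: interior edges cancel in matched pairs through (E), while the boundary contribution, under the BQ-conditions, converges to $\tfrac{1}{2}$ by the same limiting analysis Bowditch uses at the ends of $\Sigma$.

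The bulk of the work is the explicit construction of $\phi$. A natural approach is to decompose $g(x) = h(x) + k(x)$, where $h(x) = 1-\sqrt{1-4/x^2}$ is Bowditch's original summand and $k(x) = -\tfrac{\mu}{3(x^2-\mu)}\sqrt{1-4/x^2}$, which vanishes identically in the type-preserving case $\mu=0$ so that $g$ reduces to $h$. Bowditch's edge function $\phi_0$ already realizes $h(x)$ via (V). Using the Markov identity to rewrite $x^2 - \mu = xyz - y^2 - z^2$, one is led to an ansatz $\phi_1(\vec e) = A(x,y,z) - A(x',y,z)$ in which $k(x)$ is distributed symmetrically among the three edges meeting the region for $X$; one then sets $\phi := \phi_0 + \phi_1$ and verifies that $\phi_1$ contributes exactly $k(x)$ at every region $X$, using the Markov and edge relations crucially, and that the edge-symmetry property (E) still holds.

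The main obstacle is this algebraic verification together with the proper handling of the square roots: the BQ-condition (1) places every trace $x$ outside $[-2,2]$, so $\sqrt{1-4/x^2}$ is single-valued off a discrete set and consistent branch choices can be propagated across $\Sigma$. Once (V) and (E) hold, absolute convergence of the series under BQ-condition (2) follows from the decay estimates already established in \cite{Bow96, TWZ08}, and the limiting argument over $\Sigma_n$ produces the value $\tfrac{1}{2}$ on the right-hand side. A possible alternative would be to construct $\phi$ in one stroke, without the $\phi_0+\phi_1$ split, by a direct ansatz in $(x,x',y,z)$ modelled on the structure of $g(x)$; this may yield cleaner algebra but rests on the same underlying Stokes-type framework.
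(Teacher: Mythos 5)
The survey itself contains no proof of this theorem: it is quoted from \cite{HTZ13} (listed as in preparation), and the paper only indicates that the proof is obtained by ``revisiting the original Bowditch proof.'' Your proposal is therefore aligned with the route the paper points to — the trivalent tree dual to the Farey tessellation, the vertex relation $x^2+y^2+z^2-xyz=\mu$, the edge relation $x+x'=yz$, an edge function subject to vertex and edge conditions, exhaustion by finite subtrees, and boundary terms controlled by the Fibonacci-type growth of traces that the BQ-conditions force outside a finite set (which also yields absolute convergence). Up to that level of description, you are on the method the paper attributes to Hu--Tan--Zhang.

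There is, however, one concrete gap at the heart of your plan: the assertion that Bowditch's edge function $\phi_0$ ``already realizes $h(x)$ via (V).'' Bowditch's verification of the vertex condition is special to the type-preserving case $\mu=0$, where the Markov equation $x^2+y^2+z^2=xyz$ holds at every vertex; for $\mu\neq 0$ the same formulas acquire $\mu$-dependent defects, and indeed the known general-$\tau$ analogue (Theorem~\ref{thm:TWZ} of this survey) realizes the \emph{different} summand ${\mathfrak h}_\tau(x)$ and sums to $\nu$ — there is no reason $\sum_{\Omega} h(x)$ should equal anything tractable when $\mu\neq 0$. Consequently the clean split $\phi=\phi_0+\phi_1$, with $\phi_1$ contributing exactly $k(x)=-\frac{\mu}{3(x^2-\mu)}\sqrt{1-4/x^2}$ at each region, cannot be set up as stated: any correction term must simultaneously repair the vertex defect of $\phi_0$ and supply $k$, and whether an exact edge function exists for this particular rational-times-square-root summand is precisely the content of the theorem — the function $g$ is reverse-engineered from that requirement rather than given in advance. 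Your hedged alternative, a direct ansatz in $(x,x',y,z)$ built in one stroke, is in fact the necessary route; until the vertex identity (using both $x^2+y^2+z^2-xyz=\mu$ and $x+x'=yz$, with coherent branches of $\sqrt{1-4/x^2}$ propagated over the tree) is actually exhibited and checked, the proposal is a correct framework rather than a proof. The remaining ingredients you invoke — telescoping over $\Sigma_n$, the limiting value $\tfrac12$, and convergence under the BQ-conditions — are standard once such a $\phi$ exists.
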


Note that the type-preserving case occurs when $\tau=-2$ so $\mu=0$ and the above reduces to the original McShane identity in this case.

They also extended this in \cite{HTZ13b} to identities for orbits of points in ${\mathbb C}^n$ under the action of the Coxeter group $G_n$ generated by $n$-involutions which preserve the varieties defined by the Hurwitz equation.
 At the moment, it is not clear what is the underlying geometric interpretation of these identities.

%
%

\section{Concluding remarks}
We have shown that the identities obtained by Basmajian, McShane, Bridgeman-Kahn and Luo-Tan are obtained by considering decompositions of certain sets $X$ with finite measure $\mu$ associated to the manifold $M$ obtained by considering some kind of geodesic flow, either from the boundary, or from the interior. Typically, there is a subset of measure zero which is complicated but which does not contribute to the identity, and the subsets of non-zero measure in the decomposition are indexed by some simple geometric objects on the manifold. The measures of each subset typically depends only on the local geometry and data, and not on the global geometry of $M$, which may be easy or fairly complicated to compute. One can apply this general philosophy to try to obtain other interesting identities.
There are also many other interesting directions for further research and exploration in this area, we list a number of them below:
\begin{enumerate}
\item Find good applications for the identities, for example use them to say something about the moduli space of hyperbolic surfaces. The McShane-Mirzakhani identity was an important ingredient in the work of Mirzakhani in the study of the Weil-Petersson geometry of the moduli space of bordered Riemann surfaces. It would be interesting to find similar applications for the Basmajian, Bridgeman and Luo-Tan identities.

\item Generalize the McShane/Luo-Tan identities to hyperbolic manifolds of higher dimension or to translation surfaces.

\item The Basmajian and Bridgeman-Kahn identities do not generalize easily to complete finite volume hyperbolic surfaces with cusps (or cone points) as in this case the limit set does not have measure zero. It would be interesting to find other interesting decompositions of horocycles which would generalize these identities, some progress on this has been made recently by Basmajian and Parlier \cite{BasPar}.

\item It would be interesting to extend the Bowditch method to general surfaces of genus $g$ with $n$ cusps, some progress has been been recently by  Labourie and the second author, \cite{LaT13}.

\item Give a geometric interpretation of the variation of the McShane identity (Theorem \ref{thm:HTZ1}) obtained by  Hu-Tan-Zhang using the extension of the Bowditch method, and also the identity for the  $n$-variables case.
\item Extend the identities to other interesting representation spaces, see for example the work of Labourie and McShane in \cite{LaMcS09}, and Kim, Kim and Tan in \cite{KKT12}
\item Extend the moment generating point of view, and derive formulae for the moments of the McShane and Luo-Tan identities.
\item Obtain identities for general closed hyperbolic manifolds.
\item Analyze the asymptotics of the functions $f$ and $g$ in the Luo-Tan identity and use it to study the asymptotics of embedded simple surfaces in a hyperbolic surface.
\end{enumerate}

 \newpage

\frenchspacing

\end{document}